\documentclass[11pt]{article}
\usepackage{amsmath}
\usepackage{amssymb}
\usepackage{amsthm}
\usepackage{amsfonts}
\usepackage{booktabs}

\usepackage{tikz}
\usetikzlibrary{arrows,positioning,matrix, fit, shapes}
\usepackage{tikz-cd}
\usepackage[all]{xy}
\usepackage[margin=2.5cm]{geometry}
\usepackage{enumerate}

\usepackage[colorlinks=true,linkcolor=blue,citecolor=red]{hyperref}
\usepackage{enumitem}
\setenumerate[1]{itemsep=0pt,partopsep=0pt,parsep=\parskip,topsep=3pt}
\setitemize[1]{itemsep=0pt,partopsep=0pt,parsep=\parskip,topsep=3pt}
\setdescription{itemsep=0pt,partopsep=0pt,parsep=\parskip,topsep=3pt}
\setlist[itemize]{leftmargin=35pt}
\setlist[enumerate]{leftmargin=35pt}

\newcommand{\modcat}{\operatorname{mod}}

\newcommand{\Fac}{\operatorname{Fac}}
\newcommand{\Sub}{\operatorname{Sub}}
\newcommand{\add}{\mathsf{add}}
\newcommand{\Hom}{\operatorname{Hom}}
\newcommand{\Ext}{\operatorname{Ext}}

\def\supp{\mathop{\rm Supp}\nolimits}

\newcommand{\PP}{\mathsf{P}}
\newcommand{\II}{\mathsf{I}}

\usepackage{ulem}
\begin{document}
	
	\newcommand{\nc}{\newcommand}
	
	
	
	\newtheorem{theorem}{Theorem}[section]
	\newtheorem{proposition}[theorem]{Proposition}
	\newtheorem{lemma}[theorem]{Lemma}
	\newtheorem{corollary}[theorem]{Corollary}
	\newtheorem{conjecture}[theorem]{Conjecture}
	\newtheorem{question}[theorem]{Question}
	\newtheorem{definition}[theorem]{Definition}
	\newtheorem{example}[theorem]{Example}
	
	\newtheorem{remark}[theorem]{Remark}
	\def\Pf#1{{\noindent\bf Proof}.\setcounter{equation}{0}}
	\def\>#1{{ $\Rightarrow$ }\setcounter{equation}{0}}
	\def\<>#1{{ $\Leftrightarrow$ }\setcounter{equation}{0}}
	\def\bskip#1{{ \vskip 20pt }\setcounter{equation}{0}}
	\def\sskip#1{{ \vskip 5pt }\setcounter{equation}{0}}
	\def\bg#1{\begin{#1}\setcounter{equation}{0}}
		\def\ed#1{\end{#1}\setcounter{equation}{0}}
	\def\KET{T^{^F\bot}\setcounter{equation}{0}}
	\def\KEC{C^{\bot}\setcounter{equation}{0}}
	
	\renewcommand{\thefootnote}{\fnsymbol{footnote}}
	\setcounter{footnote}{0}

	\title{\bf A characterization of IE-closed subcategories   via $\tau$-tilting theory }
	\footnotetext{
		E-mail addresses: hpgao@ahu.edu.cn(Hanpeng Gao);ldjnnu2017004@163.com (Dajun Liu); yzliu3@163.com (Yu-Zhe Liu).}
	\smallskip
	\author{\small  Hanpeng Gao$^a$, Dajun Liu$^{b}\thanks{Corresponding author}$, Yu-Zhe Liu$^c$\\
		{\it \footnotesize $^a$School of Mathematical Sciences, Anhui University, Hefei 230601, China}\\{\it \footnotesize $^b$School of Mathematics and Physics, Anhui Polytechnic University, Wuhu, China }\\{\it \footnotesize $^c$School of Mathematics and Statistics, Guizhou University,  Guizhou,  China}
	}
	\date{}
	
	\maketitle
	\baselineskip 15pt
	%
	%
	\begin{abstract}
Enomoto and Sakai classified functorially finite IE-closed subcategories
over hereditary algebras in terms of twin rigid modules. Their approach
uses the hereditary assumption essentially and therefore does not extend
directly to arbitrary finite-dimensional algebras. In this paper, we
introduce canonical twin support $\tau$-tilting modules and prove that,
for an arbitrary finite-dimensional algebra, they are in bijection with
left-and-right finite IE-closed subcategories, namely those whose generated torsion
and torsion-free classes are both functorially finite. We further give
a characterization of canonicality via the torsion-pair
decompositions associated with $\operatorname{Fac} M$ and
$\operatorname{Sub} N$, which yields a canonicalization procedure whenever
the associated IE-closed subcategory is left-and-right finite. We also introduce
canonical Ext-pairs. If the algebra is hereditary or $\tau$-tilting
finite, then functorially finite IE-closed subcategories are in bijection
with isomorphism classes of canonical Ext-pairs, where the corresponding pair is given by the basic Ext-progenerator and the basic Ext-injective cogenerator.
In the hereditary case, this recovers the twin rigid classification of
Enomoto and Sakai.
		\vspace{10pt}
		
		\noindent {\bf 2020 Mathematics Subject Classification}: 16G10, 16G30.
		
		
		\noindent {\bf Keywords}: IE-closed subcategories, canonical twin support $\tau$-tilting modules,  twin rigid modules, canonical Ext-pairs.

	\end{abstract}
	%
	\vskip 30pt

	\section{Introduction}
	
	The classification of subcategories in module categories is a fundamental problem in the representation theory of Artin algebras. Classical objects of study include torsion classes, which are closed under quotients and extensions, and torsion-free classes, which are closed under submodules and extensions. Wide subcategories, closed under kernels, cokernels, and extensions, also play a central role, deeply connecting to localizations \cite{MS17} and semibricks \cite{Asai20}. Recently, Enomoto and Sakai studied ICE-closed subcategories (closed under images, cokernels, and extensions) \cite{Enomoto22} as a generalization of torsion classes, and then proved that  ICE-closed subcategories are exactly torsion classes over some wide subcategories. By relaxing the cokernel condition, Enomoto and Sakai subsequently defined IE-closed subcategories (closed under images and extensions) \cite{ES23}. It is established that a subcategory is IE-closed if and only if it is the intersection of a torsion class and a torsion-free class.
	
	Parallel to the study of subcategories, Adachi, Iyama, and Reiten introduced support $\tau$-tilting modules \cite{AIR14}. This concept resolves the mutation deficiency of classical tilting modules and provides a bijective correspondence for functorially finite torsion classes.
	
	For IE-closed subcategories, Enomoto and Sakai achieved a classification over hereditary algebras \cite[Theorem 2.14]{ES23} by establishing a bijection between functorially finite IE-closed subcategories and twin rigid modules (pairs of rigid modules $(P, I)$ satisfying specific conditions). However, the twin rigid module classification relies strictly on the hereditary property. Over a hereditary algebra, the vanishing of second extension groups ensures that \(\operatorname{Sub} I\) is extension-closed for a rigid module \(I\). For a general finite-dimensional algebra, non-vanishing second extensions may obstruct this property. Thus, the twin-rigid construction does not extend directly beyond the hereditary setting.
	
	In this paper, we establish a \(\tau\)-tilting-theoretic classification of left-and-right finite IE-closed subcategories over arbitrary finite-dimensional algebras. We replace twin rigid modules with support $\tau$-tilting modules, and support $\tau^-$-tilting modules. We introduce the notion of  canonical twin support $\tau$-tilting modules  and  canonical Ext-pairs (see Definitions \ref{3.2} and \ref{3.9}). Based on this concept, we mainly obtain the following result:

	\vspace{0.2cm}\noindent
	\textbf{Theorem A} (Theorem \ref{3.3} and \ref{3.10}) \textit{Let $\Lambda$ be a finite-dimensional algebra.
		There is a bijection between the set of left-and-right finite
		IE-closed subcategories of $\operatorname{mod}\Lambda$ and the
		set of isomorphism classes of canonical twin support
		$\tau$-tilting modules.
		Moreover, if $\Lambda$ is hereditary or $\tau$-tilting finite,
		then the functorially finite IE-closed subcategories are in
		bijection with the isomorphism classes of canonical Ext-pairs.
}
	\vspace{1mm}
	
	Under the assumptions of the second statement, the canonical Ext-pair corresponding to the left-and-right finite IE-closed subcategory $\mathcal{C}$ is exactly the Ext-progenerator and Ext-injective cogenerator of $\mathcal{C}$. The left-and-right finite IE-closed subcategories must be  functorially finite, moreover, they are  same  for  hereditary algebras.  Hence,
	Ext-pairs and twin rigid modules coincide in the case of hereditary algebras. Theorem A provides the generalization of Enomoto and Sakai's \cite[Theorem 2.14]{ES23}.

	Throughout this paper,  let $k$ be an algebraically closed field and let $\Lambda$ be a finite-dimensional $k$-algebra. 
	All modules considered are finitely generated right $\Lambda$-modules, and $\operatorname{mod}\Lambda$ denotes the category of such modules. All subcategories are assumed to be full and closed under isomorphisms.
	For a module $M\in \operatorname{mod}\Lambda$, we denote by $\mathsf{add} M$ the subcategory consisting of direct summands of finite direct sums of $M$. We denote by $\mathsf{Fac} M$ (resp., $\mathsf{Sub} M$) the subcategory consisting of quotient modules (resp., submodules) of modules in $\mathsf{add} M$. We also  denote by $|M|$ the number of pairwise nonisomorphic indecomposable summands of $M$.

	\section{Preliminaries}

	Let $\mathcal{C}$ be a subcategory of $\operatorname{mod}\Lambda$. A morphism $f \colon M \to C$ with $C \in \mathcal{C}$ is called a \textit{left $\mathcal{C}$-approximation} of $M$ if any morphism from $M$ to an object in $\mathcal{C}$ factors through $f$. The subcategory $\mathcal{C}$ is called \textit{covariantly finite} in $\operatorname{mod}\Lambda$ if every module in $\operatorname{mod}\Lambda$ admits a left $\mathcal{C}$-approximation. Dually, the notions of \textit{right $\mathcal{C}$-approximations} and \textit{contravariantly finite} subcategories are defined. A subcategory is \textit{functorially finite} if it is both covariantly and contravariantly finite \cite{AS81}.

	A subcategory $\mathcal{T}$ of $\operatorname{mod}\Lambda$ is called a \textit{torsion class} if it is closed under quotient modules and extensions. Dually, a subcategory $\mathcal{F}$ is called a \textit{torsion-free class} if it is closed under submodules and extensions.
	
	For a subcategory $\mathcal{X} \subseteq \operatorname{mod}\Lambda$, we define its right and left orthogonal subcategories as follows:
	\begin{align*}
		\mathcal{X}^\perp &= \{ Y \in \operatorname{mod}\Lambda \mid \operatorname{Hom}_\Lambda(X, Y) = 0 \text{ for all } X \in \mathcal{X} \}, \\
		^\perp\mathcal{X} &= \{ Y \in \operatorname{mod}\Lambda \mid \operatorname{Hom}_\Lambda(Y, X) = 0 \text{ for all } X \in \mathcal{X} \}.
	\end{align*}
	
	A pair of subcategories $(\mathcal{T}, \mathcal{F})$ is called a \textit{torsion pair} if $\mathcal{T} = {}^\perp\mathcal{F}$ and $\mathcal{F} = \mathcal{T}^\perp$. In this case, $\mathcal{T}$ is necessarily a torsion class and $\mathcal{F}$ is a torsion-free class. We denote the set of all torsion (resp., torsion-free) classes in $\operatorname{mod}\Lambda$ by $\operatorname{tors}\Lambda$  (resp., $\operatorname{torf}\Lambda$ ).  For a subcategory $\mathcal{X}$, we denote by $\mathcal{T}(\mathcal{X})$ the smallest torsion class containing $\mathcal{X}$, and $\mathcal{F}(\mathcal{X})$  the smallest torsion-free class containing $\mathcal{X}$.

	Let $\tau$ denote the Auslander-Reiten translation in $\operatorname{mod}\Lambda$. Following \cite{AIR14}, a module $M$ is called \textit{$\tau$-tilting} if $\operatorname{Hom}_\Lambda(M, \tau M) = 0$ and $|M|=|\Lambda|$. $M$ is called a \textit{support $\tau$-tilting module} if it is a $\tau$-tilting $\Lambda/\Lambda e\Lambda$-module for some idempotent $e$ of $\Lambda$. Dually, we have the concepts of \textit{$\tau^{-}$-tilting  modules} and \textit{support $\tau^{-}$-tilting modules}.

	The foundation of our algebraic classification relies on the following  bijection.
	\begin{theorem}[\cite{AIR14}]\label{2.1}
		Let $\Lambda$ be an algebra. There is a bijection between the set of isomorphism classes of basic support $\tau$-tilting $\Lambda$-modules and the set of functorially finite torsion classes in $\operatorname{mod}\Lambda$. The bijection is explicitly given by $M \mapsto \mathsf{Fac} M$. Dually, there is a bijection between the set of isomorphism classes of basic support $\tau^{-}$-tilting $\Lambda$-modules and the set of functorially finite torsion-free classes in $\operatorname{mod}\Lambda$  given by $N \mapsto \mathsf{Sub} N$.
	\end{theorem}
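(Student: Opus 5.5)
This is the Adachi--Iyama--Reiten bijection, so the plan is to reconstruct the standard argument from \cite{AIR14}. The torsion-free half then follows by applying $k$-duality $D=\Hom_k(-,k)\colon \modcat\Lambda\to\modcat\Lambda^{\mathrm{op}}$. Duality interchanges support $\tau$-tilting and support $\tau^{-}$-tilting modules, torsion and torsion-free classes, and $\Fac$ and $\Sub$.

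\textbf{Well-definedness.} Let $M$ be support $\tau$-tilting, viewed as a $\tau$-tilting $\Lambda/\Lambda e\Lambda$-module.
First, $\Fac M$ is functorially finite: contravariant finiteness is automatic for $\Fac$ of a module, and covariant finiteness holds by the Auslander--Smal\o\ theory \cite{AS81}.
Second, $\Fac M$ is extension-closed. For this I use the key lemma (Auslander--Smal\o): $\Hom(M,\tau N)=0$ if and only if $\Ext^1(N,\Fac M)=0$. Taking $N=M$ gives $\Ext^1(M,\Fac M)=0$. Now let $0\to X\to Y\to Z\to 0$ with $X,Z\in\Fac M$. Lift an epimorphism $M^n\to Z$ to $Y$; this is possible since the obstruction lies in $\Ext^1(M^n,X)=0$. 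Together with $M^m\to X$ this gives an epimorphism $M^{m+n}\to Y$, so $Y\in\Fac M$.

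\textbf{Injectivity.} The aim is to recover $M$ from $\mathcal T=\Fac M$ as the direct sum of the indecomposable Ext-projectives of $\mathcal T$, i.e.\ $M=\PP(\mathcal T)$. The inclusion $\add M\subseteq \add\PP(\mathcal T)$ is the Ext-vanishing just proved. For the reverse inclusion, take a minimal left $\add M$-approximation $\Lambda\to M'$ and set $M''$ to be its cokernel; Bongartz-type arguments show that $\PP(\mathcal T)=\add(M\oplus M'')$ and that $M\oplus M''$ is $\tau$-rigid. The counting result of \cite{AIR14} says a $\tau$-rigid $\Lambda/\Lambda e\Lambda$-module has at most $|\Lambda/\Lambda e\Lambda|$ nonisomorphic indecomposable summands. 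Since $|M|$ already attains this maximum, $M''\in\add M$. Hence basic $M$ is determined by $\Fac M$.

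\textbf{Surjectivity.} Given a functorially finite torsion class $\mathcal T$, take a minimal left $\mathcal T$-approximation $f\colon\Lambda\to T_0$ with cokernel $T_1$. Set $P=T_0\oplus T_1$ and let $e$ be the idempotent with $\Hom(e\Lambda,\mathcal T)=0$. One checks:
\begin{itemize}
\item $\mathcal T=\Fac P$, because every $X\in\mathcal T$ receives the surjection from $\Lambda^n$ factoring through $T_0^n$;
\item $P$ is Ext-projective in $\mathcal T$, by a pushout argument using the approximation property, hence $\tau$-rigid by the Auslander--Smal\o\ lemma.
\end{itemize}

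\textbf{Main obstacle.} The hardest step is showing that $|P|=|\Lambda/\Lambda e\Lambda|$, i.e.\ completeness. My plan is to prove that $\tau$-rigid pairs are maximal exactly when they have the right number of summands. I would do this via the Bongartz completion: every $\tau$-rigid $U$ completes to the $\tau$-tilting module $\PP({}^\perp\tau U)$. I would then compare ranks using the fact that $\Lambda/\Lambda e\Lambda\to T_0\to T_1\to 0$ exhibits $\add P$ as generating the $K$-theory of projectives, so the classes of $T_0$ and $T_1$ span a space of the correct dimension. This linear-independence and counting argument, via $g$-vectors or the two-term silting correspondence, is where I expect the main work.
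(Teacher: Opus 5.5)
The paper does not prove this statement. It is quoted from \cite{AIR14} and used as a black box, so your sketch has to be measured against AIR's argument, whose overall structure you reproduce. Well-definedness, $\Fac P=\mathcal T$, and the Ext-projectivity (hence $\tau$-rigidity) of $P=T_0\oplus T_1$ are fine. Injectivity is also fine, and it does not need the unproved identity $\add\PP(\mathcal T)=\add(M\oplus M'')$. If $X$ is Ext-projective in $\mathcal T=\Fac M$, then $\Fac(M\oplus X)=\mathcal T$, so $M\oplus X$ is a $\tau$-rigid module annihilated by $e$. The bound $|M\oplus X|\le|\Lambda/\Lambda e\Lambda|=|M|$ then forces $X\in\add M$.

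\textbf{The gap.} It is the step you postpone, $|T_0\oplus T_1|=|\Lambda/\Lambda e\Lambda|$, which is the entire content of surjectivity. Neither route you propose closes it.

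\emph{The Bongartz route is circular.} That $\PP({}^\perp(\tau U))$ is $\tau$-tilting is precisely the completeness statement for the functorially finite torsion class ${}^\perp(\tau U)$; AIR derive Bongartz completion from this very count. Even granting it, you would still have to show that $T_0\oplus T_1$ is maximal.

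\emph{The $K$-theory route fails as stated.} Over $\Lambda/\Lambda e\Lambda$ the map $\Lambda/\Lambda e\Lambda\to T_0$ is in general not injective, and $T_0,T_1$ need not have finite projective dimension. For example, let $\Lambda$ be the radical-square-zero algebra of the cycle $1\rightleftarrows 2$ with arrows $\alpha\colon 1\to 2$, $\beta\colon 2\to 1$, and let $\mathcal T=\Fac(P_1\oplus S_1)=\add\{P_1,S_1\}$. Then $e=0$, and the minimal approximation $f\colon\Lambda\to P_1^2$ has kernel $\beta\Lambda\cong S_1\neq 0$. Moreover $T_1=S_1$ has infinite projective dimension, so it has no class in $K_0(\operatorname{proj}\Lambda)$.

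\textbf{The missing idea.} Work over $\bar\Lambda=\Lambda/\operatorname{ann}\mathcal T$ instead of $\Lambda/\Lambda e\Lambda$.
\begin{itemize}
\item Since $f(\lambda)=f(1)\lambda$ and every map $\Lambda\to X\in\mathcal T$ factors through $f$, one has $\ker f=\operatorname{ann}\mathcal T$. So $0\to\bar\Lambda\to T_0\to T_1\to 0$ is exact.
\item Hence $T=T_0\oplus T_1$ is a faithful $\bar\Lambda$-module, so $D\bar\Lambda\in\Fac T$.
\item $T$ is $\tau_{\bar\Lambda}$-rigid, since $\tau_\Lambda$-rigidity passes to quotient algebras. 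Therefore $\Hom_{\bar\Lambda}(D\bar\Lambda,\tau_{\bar\Lambda}T)=0$, i.e.\ $\operatorname{pd}_{\bar\Lambda}T\le 1$.
\item So $T$ is a classical tilting $\bar\Lambda$-module; this is the Auslander--Smal\o\ input behind AIR's proof. Your $K_0$ count now works in $K_0(\operatorname{proj}\bar\Lambda)$.
\item This gives $|T|=|\bar\Lambda|=|\supp\mathcal T|=|\Lambda/\Lambda e\Lambda|$, because the simple $\bar\Lambda$-modules are exactly the $S_i$ with $i\in\supp\mathcal T$.
\end{itemize}

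A $g$-vector version also works, but only after an extra step. You must show that the cone of a lift $\Lambda\to\mathbf P_{T_0}$ in $K^b(\operatorname{proj}\Lambda)$ is $\mathbf P_{T_1}\oplus Q[1]$, up to contractible summands, with $Q\in\add e\Lambda$. This uses $\Hom_{K^b(\operatorname{proj}\Lambda)}(\mathbf P_{T_0},X[1])\cong D\Hom(X,\tau T_0)=0$ for $X\in\mathcal T$, and that is where $\tau$-rigidity actually enters the count.
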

	
	An Artin algebra $\Lambda$ is called \textit{$\tau$-tilting finite} if there are only finitely many isomorphism classes of basic support $\tau$-tilting modules. The following result characterizes this finiteness property.
	\begin{theorem}[\cite{DIJ19}]\label{2.2}
		For an Artin algebra $\Lambda$, the following conditions are equivalent:
		\begin{enumerate}[label=\text{\rm(\arabic*)}]
			\item $\Lambda$ is $\tau$-tilting finite.
			\item  $\operatorname{tors}\Lambda$ is a finite set.
			\item Every torsion class in $\operatorname{mod}\Lambda$ is functorially finite.
			\item  $\operatorname{torf}\Lambda$ is a finite set.
			\item Every torsion-free class in $\operatorname{mod}\Lambda$ is functorially finite.
		\end{enumerate}
	\end{theorem}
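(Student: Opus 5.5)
The statement is Theorem \ref{2.2}, the characterization of $\tau$-tilting finiteness due to Demonet--Iyama--Jasso. The plan is to prove the chain $(1)\Leftrightarrow(2)\Leftrightarrow(3)$ on the torsion-class side. The torsion-free side $(4),(5)$ will then follow by a symmetry argument.

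\textbf{Easy equivalences.} First, $(1)\Leftrightarrow(2)$ and $(2)\Rightarrow(3)$ are essentially formal.
\begin{itemize}
\item $(2)\Rightarrow(1)$ is immediate from Theorem \ref{2.1}: basic support $\tau$-tilting modules inject into functorially finite torsion classes via $M\mapsto\mathsf{Fac}M$, and these form a subset of the finite set $\operatorname{tors}\Lambda$.
\item $(3)\Rightarrow(2)$: under (3), Theorem \ref{2.1} becomes a bijection between basic support $\tau$-tilting modules and all of $\operatorname{tors}\Lambda$. The implication $(1)\Rightarrow(2)$ is not formal, however, and is taken care of by the cycle closed below.
\end{itemize}

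\textbf{Key step: $(1)\Rightarrow(3)$.} Assume there are only finitely many functorially finite torsion classes, and let $\mathcal{T}$ be an arbitrary torsion class. I would use the mutation theory of \cite{AIR14}. The functorially finite torsion classes form a finite poset whose Hasse quiver is given by mutation, and every cover relation arises from a mutation.

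Consider the set of functorially finite torsion classes contained in $\mathcal{T}$. It contains $0=\mathsf{Fac}\,0$ and is finite, so it has a maximal element $\mathcal{U}=\mathsf{Fac}M\subseteq\mathcal{T}$. Suppose $\mathcal{U}\neq\mathcal{T}$. By the AIR description of left finite torsion classes, there is a minimal functorially finite torsion class strictly containing $\mathcal{U}$ that is contained in $\mathcal{T}+\mathcal{U}$. Concretely, I would take a brick in $\mathcal{T}\cap\mathcal{U}^{\perp}$ and argue that some mutation of $M$ gives a torsion class $\mathcal{U}'$ with $\mathcal{U}\subsetneq\mathcal{U}'\subseteq\mathcal{T}$. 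This contradicts maximality, so $\mathcal{T}=\mathcal{U}$ is functorially finite.

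This is the main obstacle. Finiteness alone does not make arbitrary torsion classes functorially finite; one needs the fact that any torsion class strictly containing $\mathsf{Fac}M$ contains some mutation $\mathsf{Fac}\mu(M)$. I would prove this fact as follows:
\begin{itemize}
\item Pick $X\in\mathcal{T}$ not in $\mathcal{U}$, and pass to its torsion-free quotient with respect to $(\mathcal{U},\mathcal{U}^{\perp})$. Since $\mathcal{T}$ is closed under quotients, this quotient lies in $\mathcal{T}\cap\mathcal{U}^\perp$ and is nonzero.
\item Use the Bongartz completion of $M$ relative to $\mathcal{U}^{\perp}$ to identify the minimal extensions of $\mathcal{U}$ with the torsion classes $\mathcal{T}(\mathcal{U}\cup\{S\})$, where $S$ is a brick labelling an arrow out of $M$.
\item Check that $S$ can be chosen inside $\mathcal{T}$, using that the labels of arrows out of $M$ are exactly the bricks minimal in $\mathcal{U}^\perp$ with respect to generating torsion classes over $\mathcal{U}$.
\end{itemize}
Alternatively, I could instead argue that an infinite ascending chain of functorially finite torsion classes exists whenever a non-functorially-finite torsion class exists, contradicting finiteness. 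I would commit to the brick/mutation argument above.

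\textbf{Torsion-free side.} For $(2)\Leftrightarrow(4)$, the maps $\mathcal{T}\mapsto\mathcal{T}^\perp$ and $\mathcal{F}\mapsto{}^\perp\mathcal{F}$ are mutually inverse bijections $\operatorname{tors}\Lambda\leftrightarrow\operatorname{torf}\Lambda$. For $(3)\Leftrightarrow(5)$, Smal\o's theorem says $\mathcal{T}$ is functorially finite if and only if $\mathcal{T}^\perp$ is.
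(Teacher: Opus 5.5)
\emph{The direction $(3)\Rightarrow(1)$ is missing.} The paper gives no proof of this statement; it is quoted from \cite{DIJ19}. Your plan therefore has to be measured against what that theorem requires, and its skeleton does not close. Your step $(3)\Rightarrow(2)$ only shows that, under (3), the map $M\mapsto\mathsf{Fac}M$ of Theorem~\ref{2.1} is a bijection onto all of $\operatorname{tors}\Lambda$. That gives finiteness of $\operatorname{tors}\Lambda$ only if you already know (1). What you actually establish is $(2)\Rightarrow(1)\Rightarrow(3)$ and $(1)\wedge(3)\Rightarrow(2)$. So $(3)\Rightarrow(1)$, and with it $(5)\Rightarrow(1)$, is never proved.

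This is one of the two substantive directions. You must exhibit, for a $\tau$-tilting infinite algebra, a torsion class that is not functorially finite; for the Kronecker algebra, $\bigcap_j\mathsf{Fac}(P_j\oplus P_{j+1})$, the regular-plus-preinjective class, is such a class. A standard argument uses only the fact from \cite{AIR14} that if $\mathsf{Fac}U\subsetneq\mathsf{Fac}T$, then some left mutation of $T$ and some right mutation of $U$ lie between them. It runs as follows.
\begin{enumerate}
\item If $\Lambda$ is $\tau$-tilting infinite, infinitely many functorially finite torsion classes are reachable from $\operatorname{mod}\Lambda$ by downward mutation paths. Otherwise, descending step by step toward any functorially finite class would stay in a finite set and so reach it, making every class reachable.
\item At most $|\Lambda|$ arrows leave each vertex, so K\"onig's lemma gives a strictly descending chain $\mathcal{T}_0\supsetneq\mathcal{T}_1\supsetneq\cdots$ of functorially finite torsion classes.
\item If $\bigcap_i\mathcal{T}_i=\mathsf{Fac}U$ were functorially finite, each $\mathcal{T}_i$ would contain $\mathsf{Fac}$ of some right mutation of $U$. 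There are finitely many of these and the chain descends, so one of them lies in every $\mathcal{T}_i$, hence in $\mathsf{Fac}U$. This is a contradiction.
\end{enumerate}

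\emph{The key lemma for $(1)\Rightarrow(3)$ is not proved.} Your $(1)\Rightarrow(3)$ has the right shape: take a maximal functorially finite $\mathcal{U}\subseteq\mathcal{T}$, then use the lemma from \cite{DIJ19} that a torsion class strictly containing a functorially finite one contains one of its mutations. Your sketch of that lemma, however, does not contain its proof.

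Your last bullet presupposes that, for nonzero $Y\in\mathcal{T}\cap\mathcal{U}^\perp$, there is a label $S$ with $\mathcal{T}(\mathcal{U}\cup\{S\})\subseteq\mathcal{T}(\mathcal{U}\cup\{Y\})$. That existence is the whole content, and it fails without functorial finiteness. Take the Kronecker algebra and $\mathcal{U}$ the regular-plus-preinjective class. Each $P_j$ is a brick in $\mathcal{U}^\perp$, and the classes $\mathcal{T}(\mathcal{U}\cup\{P_j\})=\mathsf{Fac}(P_j\oplus P_{j+1})$ strictly decrease. So $\mathcal{U}$ has no upper cover at all. Nothing in your bullets uses that $\mathcal{U}$ is functorially finite. (Also, $M$ is already support $\tau$-tilting, so it is its own Bongartz completion, and the arrows you need end at $M$ rather than start from it.)

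One way to fill the gap is the following.
\begin{enumerate}
\item Write $\mathcal{U}^\perp=\mathsf{Sub}N$ with $N$ basic support $\tau^-$-tilting, and set $B_i=\bigcap_{f\in\operatorname{rad}(N_i,N)}\operatorname{Ker}f\subseteq N_i$.
\item Since $N_i/B_i\in\mathsf{Sub}N$ and $\operatorname{Ext}^1(\mathsf{Sub}N,N)=0$, every map $B_i\to N_j$ extends to $N_i\to N_j$. A radical extension kills $B_i$, and a non-radical one is an isomorphism. Hence every nonzero morphism from $B_i$ to an object of $\mathsf{Sub}N$ is injective.
\item Filtering $N$ by the kernels of the powers of $\operatorname{rad}\operatorname{End}N$ gives $\mathsf{Sub}N=\mathcal{F}(\bigoplus_i B_i)$.
\item Suppose no nonzero $B_i$ lay in $\mathcal{T}={}^\perp(\mathcal{T}^\perp)$. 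Then each would embed into an object of $\mathcal{T}^\perp\subseteq\mathsf{Sub}N$, hence lie in $\mathcal{T}^\perp$. This forces $\mathcal{T}^\perp\supseteq\mathsf{Sub}N$, i.e.\ $\mathcal{T}\subseteq\mathcal{U}$, which is impossible.
\item So some $B_i\neq0$ lies in $\mathcal{T}$, and $\mathcal{T}\supseteq\mathcal{T}(\mathcal{U}\cup\{B_i\})={}^\perp(\mathsf{Sub}N\cap B_i^\perp)={}^\perp\mathsf{Sub}(N/N_i)$. The last equality is the standard brick-label description of mutation. This class is functorially finite and strictly contains $\mathcal{U}$, which contradicts maximality.
\end{enumerate}
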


	\begin{definition}[\cite{ES23}]\label{2.3}
		A subcategory $\mathcal{C}$ of $\operatorname{mod}\Lambda$ is called IE-closed if it is closed under images and extensions.
	\end{definition}
	
	The following basic property characterizes IE-closed subcategories via torsion theory.
	\begin{proposition}[\cite{ES23}]\label{2.4}
		A subcategory $\mathcal{C}$ is IE-closed if and only if there exist a torsion class $\mathcal{T}$ and a torsion-free class $\mathcal{F}$ such that $\mathcal{C} = \mathcal{T} \cap \mathcal{F}$. In particular, for any IE-closed subcategory $\mathcal{C}$, it intrinsically satisfies $\mathcal{C} = \mathcal{T}(\mathcal{C}) \cap \mathcal{F}(\mathcal{C})$.
	\end{proposition}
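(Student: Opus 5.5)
For the forward direction, suppose $\mathcal{C} = \mathcal{T} \cap \mathcal{F}$ with $\mathcal{T}$ a torsion class and $\mathcal{F}$ a torsion-free class. Take a morphism $f\colon X \to Y$ with $X, Y \in \mathcal{C}$. Its image $\operatorname{Im} f$ is a quotient of $X \in \mathcal{T}$, so it lies in $\mathcal{T}$. It is also a submodule of $Y \in \mathcal{F}$, so it lies in $\mathcal{F}$. Hence $\operatorname{Im} f \in \mathcal{C}$. For extensions, if $0 \to X \to E \to Y \to 0$ is exact with $X, Y \in \mathcal{C}$, then $E$ lies in $\mathcal{T}$ and in $\mathcal{F}$, because each of these classes is extension-closed. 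So $\mathcal{C}$ is IE-closed.

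For the converse, and for the ``in particular'' statement, it is enough to show that an IE-closed $\mathcal{C}$ satisfies $\mathcal{T}(\mathcal{C}) \cap \mathcal{F}(\mathcal{C}) \subseteq \mathcal{C}$; the reverse inclusion is trivial. I will use the standard descriptions of the generated classes:
\begin{itemize}
\item $\mathcal{T}(\mathcal{C}) = \operatorname{Filt}(\operatorname{Fac}\mathcal{C})$, the modules admitting a finite filtration whose subquotients are quotients of objects of $\mathcal{C}$;
\item $\mathcal{F}(\mathcal{C}) = \operatorname{Filt}(\operatorname{Sub}\mathcal{C})$, defined dually with submodules of objects of $\mathcal{C}$.
\end{itemize}
Because $\mathcal{C}$ is extension-closed, finite direct sums of objects of $\mathcal{C}$ remain in $\mathcal{C}$. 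It therefore suffices to consider quotients and submodules of single objects of $\mathcal{C}$.

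The key step is to take $M \in \mathcal{T}(\mathcal{C}) \cap \mathcal{F}(\mathcal{C})$ and prove $M \in \mathcal{C}$ by induction on the length of $M$. One natural attempt uses the $\mathcal{T}$-side. Since $M \in \mathcal{T}(\mathcal{C})$, there is a nonzero morphism $C \to M$ with $C \in \mathcal{C}$; for instance, take a surjection from $C$ onto the bottom filtration factor. Choose a right $\operatorname{add}$-approximation from a finite sum of such objects, and let $U \subseteq M$ be the sum of the images of all morphisms from $\mathcal{C}$ to $M$; this is the image of a single map $C_0 \to M$ with $C_0 \in \mathcal{C}$. Then $U$ is a submodule of $M \in \mathcal{F}(\mathcal{C})$, so $U \in \mathcal{F}(\mathcal{C})$. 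Also $U$ is a quotient of $C_0$, so $U \in \mathcal{T}(\mathcal{C})$.

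To conclude $U \in \mathcal{C}$, I would instead exploit that $U$ embeds into an object of $\mathcal{C}$. This is where the real difficulty lies. Knowing only that $U \in \mathcal{F}(\mathcal{C})$ does not immediately give an embedding into an object of $\mathcal{C}$.

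To handle this cleanly, I would use the known lattice-theoretic argument from \cite{ES23}. For an IE-closed $\mathcal{C}$ one has:
\begin{itemize}
\item $\mathcal{T}(\mathcal{C}) = \operatorname{Filt}(\operatorname{Fac}\mathcal{C})$, and every $M \in \mathcal{T}(\mathcal{C})$ has a filtration $0 = M_0 \subset M_1 \subset \cdots \subset M_n = M$ whose factors are quotients of objects of $\mathcal{C}$;
\item dually, every module in $\mathcal{F}(\mathcal{C})$ has a filtration whose factors are submodules of objects of $\mathcal{C}$.
\end{itemize}
The plan is to show that if $M$ lies in both, one can refine the argument so that each factor is simultaneously a quotient and a submodule of objects of $\mathcal{C}$. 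Such a factor is then the image of a morphism between objects of $\mathcal{C}$, hence lies in $\mathcal{C}$ by image-closure. Extension-closure then gives $M \in \mathcal{C}$.

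The main obstacle is this refinement step: matching a $\operatorname{Fac}$-filtration with a $\operatorname{Sub}$-filtration. I would try induction on length, taking $U$ as above (the $\mathcal{C}$-trace in $M$) and considering $M/U$. Then $M/U$ lies in $\mathcal{T}(\mathcal{C})$. I expect to use a wide-subcategory/brick-type argument, or Enomoto--Sakai's original proof via $\mathcal{C} = {}^{\perp}(\mathcal{C}^{\perp}) \cap (({}^{\perp}\mathcal{C})^{\perp})$, to show that both $U$ and $M/U$ stay in the intersection.
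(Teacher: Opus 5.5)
There is a genuine gap. Your forward direction is correct, but the converse, which is the only substantive part, is not proved. The paper gives no proof either; it simply quotes Enomoto--Sakai.

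You set $U=\operatorname{Im}(C_0\to M)$, the $\mathcal{C}$-trace of $M$, and correctly note that $U\in\operatorname{Fac}\mathcal{C}\cap\mathcal{F}(\mathcal{C})$. You then stop exactly where one must show $U\in\mathcal{C}$. What follows is a plan rather than an argument:
\begin{itemize}
\item ``Matching a Fac-filtration with a Sub-filtration'' is stated without any mechanism for doing the matching.
\item Invoking $\mathcal{C}={}^{\perp}(\mathcal{C}^{\perp})\cap({}^{\perp}\mathcal{C})^{\perp}$ is circular, since it is literally the identity $\mathcal{C}=\mathcal{T}(\mathcal{C})\cap\mathcal{F}(\mathcal{C})$ you are trying to prove.
\item Your induction through $M/U$ would not close: $M/U\in\mathcal{T}(\mathcal{C})$, but nothing gives $M/U\in\mathcal{F}(\mathcal{C})$, so the inductive hypothesis does not apply.
\end{itemize}

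The missing idea is to pair the trace with its dual, the reject, and to use extension-closure through a pushout and a pullback.

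First, show $U\in\mathcal{C}$. Let $R$ be the intersection of the kernels of all maps $U\to C'$ with $C'\in\mathcal{C}$. By finite length, $U/R$ embeds into a finite direct sum of objects of $\mathcal{C}$, which lies in $\mathcal{C}$. Since $U/R$ is also a quotient of $C_0$, it is the image of a morphism between objects of $\mathcal{C}$, so $U/R\in\mathcal{C}$ by image-closure. Suppose $R\neq0$. Then $R\in\mathcal{F}(\mathcal{C})$ as a submodule of $M$, and $\mathcal{F}(\mathcal{C})\cap{}^{\perp}\mathcal{C}=0$ gives a nonzero $h\colon R\to C_2$ with $C_2\in\mathcal{C}$. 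Push out $0\to R\to U\to U/R\to0$ along $h$ to get $0\to C_2\to E'\to U/R\to0$, so $E'\in\mathcal{C}$. The induced map $U\to E'$ is nonzero on $R$, contradicting the definition of $R$. Hence $R=0$ and $U\in\mathcal{C}$.

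Second, show $U=M$. If $U\neq M$, then $0\neq M/U\in\mathcal{T}(\mathcal{C})$, and $\mathcal{T}(\mathcal{C})\cap\mathcal{C}^{\perp}=0$ gives a nonzero $g\colon C_1\to M/U$ with $C_1\in\mathcal{C}$. Pull back $0\to U\to M\to M/U\to0$ along $g$ to get $0\to U\to E\to C_1\to0$, so $E\in\mathcal{C}$. The map $E\to M$ composed with $M\to M/U$ factors as $E\twoheadrightarrow C_1\xrightarrow{g}M/U$, which is nonzero. So the image of $E\to M$ is not contained in $U$, contradicting maximality of the trace. Therefore $M=U\in\mathcal{C}$, and no induction on length is needed.
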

	
	\section{Main  Results}
	
We call an IE-closed subcategory $ \mathcal{C}$  left-and-right finite if both $ \mathcal{T}(\mathcal{C})$ and  $ \mathcal{F}(\mathcal{C})$ are functorially finite. In this section, we establish a characterization of left-and-right finite IE-closed subcategories for algebras.

	\begin{proposition}\label{3.1}
		Let $\mathcal{C} = \mathcal{T} \cap \mathcal{F}$ be an IE-closed subcategory in $\operatorname{mod}\Lambda$, where   $\mathcal{T}$ is a torsion class and $\mathcal{F}$ is a torsion-free class. If both $\mathcal{T}$ and $\mathcal{F}$ are functorially finite, then $\mathcal{C}$ is a functorially finite subcategory.
	\end{proposition}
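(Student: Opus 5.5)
The plan is to build the two approximations directly, using the closure properties of torsion classes under quotients and of torsion-free classes under submodules. By symmetry (duality $D=\Hom_k(-,k)$ exchanges torsion classes and torsion-free classes and preserves functorial finiteness), it suffices to prove that $\mathcal{C}$ is contravariantly finite. The dual argument then gives covariant finiteness.

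Fix $X\in\modcat\Lambda$ and proceed in three steps.
\begin{enumerate}
\item Since $\mathcal{F}$ is functorially finite, take a right $\mathcal{F}$-approximation $f\colon F_X\to X$. In fact only contravariant finiteness of $\mathcal{F}$ is needed here; note that it is not automatic, since right $\mathcal{F}$-approximations need not be surjective.
\item Since $\mathcal{T}$ is a torsion class, take the torsion submodule $tF_X\subseteq F_X$, i.e.\ the largest submodule of $F_X$ lying in $\mathcal{T}$. Because $\mathcal{F}$ is closed under submodules, $tF_X\in\mathcal{F}$, so $tF_X\in\mathcal{T}\cap\mathcal{F}=\mathcal{C}$.
\item Show that $g:=f|_{tF_X}\colon tF_X\to X$ is a right $\mathcal{C}$-approximation.
\end{enumerate}

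For step 3, let $C\in\mathcal{C}$ and $h\colon C\to X$. Since $C\in\mathcal{F}$, $h$ factors as $h=f\circ h'$ with $h'\colon C\to F_X$. The image $h'(C)$ is a quotient of $C\in\mathcal{T}$, so it lies in $\mathcal{T}$ and is a submodule of $F_X$. Hence $h'(C)\subseteq tF_X$, and $h'$ factors through the inclusion $tF_X\hookrightarrow F_X$. This gives the factorization of $h$ through $g$.

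Dually, for covariant finiteness, take a left $\mathcal{T}$-approximation $X\to T^X$ and compose with the quotient $T^X\to T^X/tT^X\cdot$. More precisely, use the quotient by the largest submodule $K\subseteq T^X$ with $T^X/K\in\mathcal{F}$; this exists because $\mathcal{F}$ is a torsion-free class, so $(^\perp\mathcal{F},\mathcal{F})$ is a torsion pair and $K$ is the $^\perp\mathcal{F}$-torsion part. The quotient $T^X/K$ lies in $\mathcal{T}$, being a quotient of $T^X$, and in $\mathcal{F}$, so it lies in $\mathcal{C}$. Any map $T^X\to C$ with $C\in\mathcal{C}$ has image in $\mathcal{F}$, so it kills $K$ and factors through $T^X/K$.

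There is no real obstacle. The only point requiring care is checking that the torsion submodule, respectively the torsion-free quotient, is taken with respect to the correct torsion pair, namely $(\mathcal{T},\mathcal{T}^\perp)$ and $(^\perp\mathcal{F},\mathcal{F})$ respectively.
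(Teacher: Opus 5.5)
Your proposal is correct and takes the same approach as the paper. For contravariant finiteness, both compose a right $\mathcal{F}$-approximation with the inclusion of the $\mathcal{T}$-torsion submodule. For covariant finiteness, both compose a left $\mathcal{T}$-approximation with the torsion-free quotient for $({}^\perp\mathcal{F},\mathcal{F})$; you write this half out explicitly, whereas the paper only says ``dually''.
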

	
	\begin{proof}
		First, let $M \in \operatorname{mod}\Lambda$. Since $\mathcal{F}$ is contravariantly finite, we take a right $\mathcal{F}$-approximation $g \colon F \to M$. Next, since $\mathcal{T}$ is a torsion class, the   right $\mathcal{T}$-approximation of $F$ is precisely the inclusion of its torsion radical $h \colon t_{\mathcal{T}}(F) \hookrightarrow F$.
		We claim that the composition $g \circ h \colon t_{\mathcal{T}}(F) \to M$ is a right $\mathcal{C}$-approximation of $M$. Indeed, $h$ is a monomorphism meaning $t_{\mathcal{T}}(F)$ is a submodule of $F$. Since $\mathcal{F}$ is closed under submodules, we have  $t_{\mathcal{T}}(F) \in \mathcal{F}$. Hence, $t_{\mathcal{T}}(F) \in \mathcal{T} \cap \mathcal{F} = \mathcal{C}$.
		Let $C \in \mathcal{C}$ and $\alpha \colon C \to M$ be an arbitrary morphism. Since $C \in \mathcal{F}$ and $g$ is a right $\mathcal{F}$-approximation, $\alpha$ factors through $g$ via some $\alpha' \colon C \to F$. Furthermore, since $C \in \mathcal{T}$ and $h$ is a right $\mathcal{T}$-approximation, $\alpha'$ factors through $h$ via some $\beta \colon C \to t_{\mathcal{T}}(F)$. This yields the following commutative diagram:
		\[
		\xymatrix{
			C \ar[rrd]^{\alpha} \ar[rd]|{\alpha'} \ar@{-->}[d]_{\beta} & & \\
			t_{\mathcal{T}}(F) \ar@{^{(}->}[r]_{h} & F \ar[r]_{g} & M
		}
		\]
		Thus, $\alpha = (g \circ h) \circ \beta$.	Therefore, $g \circ h$ serves as a right $\mathcal{C}$-approximation, and $\mathcal{C}$ is contravariantly finite.
		
		Dually, we can prove that $\mathcal{C}$ is covariantly finite.
		So it is functorially finite.

	\end{proof}

Hence, 	every left-and-right finite IE-closed subcategory is always functorially finite. Moreover, they are  the same  for  hereditary algebras by \cite[Lemma 2.9]{ES23}.  If $\Lambda$ is $\tau$-tilting finite,  there are only finitely many torsion classes in $\operatorname{mod}\Lambda$ by Theorem \ref{2.2}, all of which are functorially finite. Thus, every IE-closed subcategory is left-and-right finite and hence  functorially finite.

	\begin{definition}\label{3.2}
		Let $\Lambda$ be an algebra. A pair of modules $(M, N)$ is called a \textbf{twin support $\tau$-tilting module} if  $M$ is a basic support $\tau$-tilting module and $N$ is a basic support $\tau^{-}$-tilting module. A	twin support $\tau$-tilting module $(M, N)$ is called  \textbf{canonical} if  it satisfies 	$	\mathrm{Fac}(M) = \mathcal{T}(\mathrm{Fac}(M) \cap \mathrm{Sub}(N))$ and $
		\mathrm{Sub}(N) = \mathcal{F}(\mathrm{Fac}(M) \cap \mathrm{Sub}(N)).$
		
	\end{definition}

	\begin{theorem} \label{3.3}
		Let $\Lambda$ be an   algebra. There is a bijection between the set of left-and-right finite IE-closed subcategories of $\mathrm{mod}\, \Lambda$ and  the set of isomorphism classes of canonical twin support $\tau$-tilting modules $(M, N)$ in $\mathrm{mod}\, \Lambda$. The bijection is explicitly given by the mappings:
		\begin{align*}
			\Phi: (M, N) &\mapsto \mathrm{Fac}(M) \cap \mathrm{Sub}(N), \\
			\Psi: \mathcal{C} &\mapsto (M_{\mathcal{C}}, N_{\mathcal{C}}),
		\end{align*}
		where $M_{\mathcal{C}}$ is the basic support $\tau$-tilting module generating $\mathcal{T}(\mathcal{C})$, and $N_{\mathcal{C}}$ is the basic support $\tau^{-}$-tilting module cogenerating $\mathcal{F}(\mathcal{C})$.
	\end{theorem}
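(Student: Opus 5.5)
We will check that $\Phi$ and $\Psi$ are well defined and mutually inverse. The tools are Theorem \ref{2.1} (support $\tau$-tilting modules correspond to functorially finite torsion classes via $M\mapsto \Fac M$, and dually via $N\mapsto\Sub N$), Proposition \ref{2.4}, and the minimality of $\mathcal{T}(-)$ and $\mathcal{F}(-)$.

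\emph{$\Phi$ is well defined.} Let $(M,N)$ be a canonical twin support $\tau$-tilting module and put $\mathcal{C}=\Fac M\cap\Sub N$. By Theorem \ref{2.1}, $\Fac M$ is a functorially finite torsion class and $\Sub N$ is a functorially finite torsion-free class. Proposition \ref{2.4} then shows that $\mathcal{C}$ is IE-closed. Canonicity gives $\mathcal{T}(\mathcal{C})=\Fac M$ and $\mathcal{F}(\mathcal{C})=\Sub N$. Both are functorially finite, so $\mathcal{C}$ is left-and-right finite. Since $\Phi$ depends only on $\Fac M$ and $\Sub N$, it is constant on isomorphism classes.

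\emph{$\Psi$ is well defined.} Let $\mathcal{C}$ be left-and-right finite. Then $\mathcal{T}(\mathcal{C})$ is a functorially finite torsion class. By Theorem \ref{2.1} there is a unique basic support $\tau$-tilting module $M_{\mathcal{C}}$, up to isomorphism, with $\Fac M_{\mathcal{C}}=\mathcal{T}(\mathcal{C})$. Dually there is a unique basic support $\tau^-$-tilting module $N_{\mathcal{C}}$ with $\Sub N_{\mathcal{C}}=\mathcal{F}(\mathcal{C})$. By Proposition \ref{2.4},
\[\Fac M_{\mathcal{C}}\cap \Sub N_{\mathcal{C}}=\mathcal{T}(\mathcal{C})\cap\mathcal{F}(\mathcal{C})=\mathcal{C}.\]
Hence $\mathcal{T}(\Fac M_{\mathcal{C}}\cap\Sub N_{\mathcal{C}})=\mathcal{T}(\mathcal{C})=\Fac M_{\mathcal{C}}$, and likewise for $\mathcal{F}$. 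So $(M_{\mathcal{C}},N_{\mathcal{C}})$ is canonical. This computation also shows $\Phi\Psi(\mathcal{C})=\mathcal{C}$.

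\emph{$\Psi\Phi$ is the identity.} Start from a canonical $(M,N)$ and let $\mathcal{C}=\Phi(M,N)$. By canonicity, $\mathcal{T}(\mathcal{C})=\Fac M$. So $M_{\mathcal{C}}$ and $M$ are basic support $\tau$-tilting modules with the same $\Fac$. The injectivity part of Theorem \ref{2.1} gives $M_{\mathcal{C}}\cong M$, and dually $N_{\mathcal{C}}\cong N$.

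No step is a real obstacle. The only point requiring care is that canonicity is exactly the condition making $\Phi$ injective. Without it, distinct pairs $(\Fac M,\Sub N)$ can have the same intersection. Canonicity pins each torsion and torsion-free class down as the one generated by the intersection.
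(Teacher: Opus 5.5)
Your proof is correct and is essentially the paper's argument. Canonicity together with Theorem \ref{2.1} and Proposition \ref{2.4} shows that $\Phi$ lands in left-and-right finite IE-closed subcategories, and the uniqueness part of Theorem \ref{2.1} gives that $\Psi\Phi$ is the identity. The equality $\mathcal{C}=\mathcal{T}(\mathcal{C})\cap\mathcal{F}(\mathcal{C})$ then yields both $\Phi\Psi(\mathcal{C})=\mathcal{C}$ and the canonicity of $(M_{\mathcal{C}},N_{\mathcal{C}})$.
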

	
	\begin{proof}
		Firstly, let $(M, N)$ be a canonical twin support $\tau$-tilting module.
		Theorem \ref{2.1}  implies $ \mathrm{Fac}(M)$ is a functorially finite torsion class and $\mathrm{Sub}(N)$ is a functorially finite torsion-free class.  By the  definition of canonical twin support $\tau$-tilting modules $(M, N)$, we have:
		\[
		\mathcal{T}(\mathcal{C}) = \mathcal{T}(\mathrm{Fac}M \cap \mathrm{Sub}N) = \mathrm{Fac}M,~~\text{and}~~~
		\mathcal{F}(\mathcal{C}) = \mathcal{F}(\mathrm{Fac}M \cap \mathrm{Sub}N) = \mathrm{Sub}N.
		\]
	 Proposition \ref{2.4} implies $\mathcal{C} = \Phi(M, N) = \mathrm{Fac}M \cap \mathrm{Sub}N$ is a left-and-right finite IE-closed subcategory.
	 Since $M$ is a basic support $\tau$-tilting module generating the same torsion class as $M_{\mathcal{C}}$, the uniqueness of support $\tau$-tilting modules ensures that $M_{\mathcal{C}} \cong M$. Dually, we have $N_{\mathcal{C}} \cong N$. Thus, $\Psi(\Phi(M, N)) = (M, N)$.
		
		Secondly, let $\mathcal{C}$ be a left-and-right finite IE-closed subcategory. Then both the torsion class $\mathcal{T}(\mathcal{C})$ and torsion-free class $\mathcal{F}(\mathcal{C})$ are functorially finite. Thus, there exists a unique basic support $\tau$-tilting module $M_{\mathcal{C}}$ such that $\mathrm{Fac}M_{\mathcal{C}} = \mathcal{T}(\mathcal{C})$, and a unique basic support $\tau^{-}$-tilting module $N_{\mathcal{C}}$ such that $\mathrm{Sub}N_{\mathcal{C}} = \mathcal{F}(\mathcal{C})$.
		By definition, $\Phi(M_{\mathcal{C}}, N_{\mathcal{C}}) = \mathrm{Fac}M_{\mathcal{C}} \cap \mathrm{Sub}N_{\mathcal{C}} = \mathcal{T}(\mathcal{C}) \cap \mathcal{F}(\mathcal{C})$. Proposition \ref{2.4} implies $\mathcal{C} = \mathcal{T}(\mathcal{C}) \cap \mathcal{F}(\mathcal{C})$. Hence, $\Phi(\Psi(\mathcal{C})) = \mathcal{C}$. Moreover,  $\mathcal{C} = \mathrm{Fac}M_{\mathcal{C}} \cap \mathrm{Sub}N_{\mathcal{C}}$  yields:
		\begin{align*}
			\mathcal{T}(\mathrm{Fac}M_{\mathcal{C}} \cap \mathrm{Sub}N_{\mathcal{C}}) &= \mathcal{T}(\mathcal{C}) = \mathrm{Fac}M_{\mathcal{C}}, \\
			\mathcal{F}(\mathrm{Fac}M_{\mathcal{C}} \cap \mathrm{Sub}N_{\mathcal{C}}) &= \mathcal{F}(\mathcal{C}) = \mathrm{Sub}N_{\mathcal{C}}.
		\end{align*}
		This confirms that $(M_{\mathcal{C}}, N_{\mathcal{C}})$ is indeed a canonical twin support $\tau$-tilting module.
		
		Finally, the bijection is established.
	\end{proof}

	\begin{remark}\label{rmk:canonical_necessity}
		Without the ``canonical'' conditions, the map 	$\Phi(M, N)= \mathrm{Fac}(M) \cap \mathrm{Sub}(N)$ is  not injective. Let $\mathcal{C} = \{0\}$ be the trivial IE-closed subcategory  in $\modcat\Lambda$. For any support $\tau$-tilting module $M$, the image of the twin support $\tau$-tilting module $(M,0)$ under $\Phi$ has always been $\mathcal{C}$.
		However,  $(M,0)$ fails to be canonical  unless $M$ equals 0.
		
	\end{remark}

	For $M\in\modcat \Lambda$, let $\supp M:=\{i|\Hom_{\Lambda}(P_i, M)\neq 0\}$ which is called the support of $M$. In addition, the  support of
	a subcategory 
	$\mathcal{C}$  is  $\supp \mathcal{C}:=\bigcup\limits_{M\in\mathcal{C}}\supp M$. It is clear $\supp \mathcal{C}=\emptyset$ if and only if $\mathcal{C}=\{0\}$.
	The following results are  obvious:
	$$\supp \Fac M=\supp M\quad\text{and}\quad\supp \Sub N=\supp N,$$
	$$\supp \mathcal{C}=\supp \mathcal{T}(\mathcal{C})=\supp \mathcal{F}(\mathcal{C}).$$
	Thus,   a canonical twin support $\tau$-tilting module $(M, N)$ is necessary to satisfy the condition $\supp M=\supp N$.

	\begin{remark} This mapping $\Psi$ is different from the one given by Enomoto and Sakai in \cite{ES23}, even in terms of  hereditary algebras. They obtained the Ext-projective object and the Ext-injective object within the IE-closed subcategory $\mathcal{C} $, while we obtain  the Ext-projective  (resp., Ext-injective) object within the torsion class $ \mathcal{T}(\mathcal{C})$ (resp., torsion-free class $ \mathcal{F}(\mathcal{C}) $). For example, let $\Lambda=k(2\to 1)$ and $\mathcal{C} = \mathsf{add}\{P_2\}$ be an IE-closed subcategory. Then it corresponds to twin rigid module $(P_2,P_2)$ and canonical twin support $\tau$-tilting module $(P_2\oplus S_2, P_2\oplus S_1)$. 
	\end{remark}


	An IE-closed subcategory of  $\operatorname{mod}\Lambda$  is called ICE-closed if $\mathsf{Cok}(\mathcal{C})\subseteq\mathcal{C} $ where  $\mathsf{Cok}(\mathcal{C})=\{Z|X\to Y\to Z\to 0,X,Y\in \mathcal{C}\} $. Dually, the notions for IKE-closed subcategories and $\mathsf{Ker}(\mathcal{C})$ are defined.
	By imposing specific conditions on the canonical twin support $\tau$-tilting module  $(M, N)$, we can describe some specific IE-closed subcategories.

	\begin{proposition} \label{cor:unification}
		Let $\Lambda$ be an algebra. Assume that  $\mathcal{C}$ is a left-and-right finite IE-closed subcategory which corresponds to the canonical twin support $\tau$-tilting module  $(M, N)$, then
		
		\begin{enumerate}[label=\text{\rm(\arabic*)}]
			\item
			$\mathcal{C}$ is a torsion class if and only if $  \mathsf{Fac} M\subseteq \mathsf{Sub}N$.
			
			\item
			$\mathcal{C}$ is a torsion-free class if and only if  $\mathsf{Sub}{N}\subseteq \mathsf{Fac} M$.
			
			\item $\mathcal{C}$ is  ICE-closed  if and only if $\mathsf{Cok}(\mathcal{C}) \subseteq \mathsf{Sub}N$.

			\item  $\mathcal{C}$ is  IKE-closed if and only if $\mathsf{Ker}(\mathcal{C}) \subseteq \mathsf{Fac} M$.
			
		\end{enumerate}
	\end{proposition}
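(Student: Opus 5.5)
The plan is to reduce every item to Theorem \ref{3.3} and Proposition \ref{2.4}. We have $\mathcal{C}=\mathsf{Fac}M\cap\mathsf{Sub}N$, $\mathcal{T}(\mathcal{C})=\mathsf{Fac}M$ and $\mathcal{F}(\mathcal{C})=\mathsf{Sub}N$. In each item we compare $\mathcal{C}$ with the smallest torsion (resp. torsion-free) class containing it.

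For (1), first suppose $\mathsf{Fac}M\subseteq\mathsf{Sub}N$. Then $\mathcal{C}=\mathsf{Fac}M\cap\mathsf{Sub}N=\mathsf{Fac}M$, which is a torsion class by Theorem \ref{2.1}. Conversely, suppose $\mathcal{C}$ is a torsion class. Then $\mathcal{T}(\mathcal{C})=\mathcal{C}$, so $\mathsf{Fac}M=\mathcal{C}\subseteq\mathsf{Sub}N$. Item (2) is the dual argument, using $\mathcal{F}(\mathcal{C})=\mathsf{Sub}N$.

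For (3), first suppose $\mathcal{C}$ is ICE-closed. Then $\mathsf{Cok}(\mathcal{C})\subseteq\mathcal{C}\subseteq\mathsf{Sub}N$. Conversely, take $Z\in\mathsf{Cok}(\mathcal{C})$, with an exact sequence $X\to Y\to Z\to 0$ and $X,Y\in\mathcal{C}$. Since $Z$ is a quotient of $Y\in\mathsf{Fac}M$ and $\mathsf{Fac}M$ is closed under quotients, we get $Z\in\mathsf{Fac}M$. Together with the hypothesis $Z\in\mathsf{Sub}N$, this gives $Z\in\mathcal{C}$. Since $\mathcal{C}$ is already IE-closed by Proposition \ref{2.4}, it is ICE-closed. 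Item (4) is dual: a kernel of a map $X\to Y$ with $X,Y\in\mathcal{C}$ is a submodule of $X\in\mathsf{Sub}N$, so it lies in $\mathsf{Sub}N$, and the hypothesis supplies $\mathsf{Fac}M$.

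The only point needing any care is in (1) and (2): identifying $\mathcal{C}$ with its generated torsion class uses the equality $\mathcal{T}(\mathcal{C})=\mathsf{Fac}M$. This is precisely where canonicality (equivalently, that $(M,N)=\Psi(\mathcal{C})$ in Theorem \ref{3.3}) is used. Everything else is formal closure under quotients and submodules.
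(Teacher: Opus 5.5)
Your proposal is correct and follows essentially the same route as the paper: you get (1) and (2) from the equalities $\mathcal{T}(\mathcal{C})=\mathsf{Fac}M$ and $\mathcal{F}(\mathcal{C})=\mathsf{Sub}N$ that canonicality provides, and (3) and (4) from the automatic inclusions $\mathsf{Cok}(\mathcal{C})\subseteq\mathsf{Fac}M$ and $\mathsf{Ker}(\mathcal{C})\subseteq\mathsf{Sub}N$. You also correctly note that canonicality is needed only for the ``only if'' directions of (1) and (2).
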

	
	\begin{proof}
		Based on the assumption, we have 
		$$\mathcal{C}=\mathrm{Fac}M \cap \mathrm{Sub}N\quad \text{and}\quad\mathcal{T}(\mathcal{C}) = \mathcal{T}(\mathrm{Fac}M \cap \mathrm{Sub}N) = \mathrm{Fac}M.$$
		Hence, $\mathcal{C}$ is a torsion class if and only if $\mathcal{C}=\mathcal{T}(\mathcal{C}) $. This is equivalent to  $  \mathsf{Fac} M\cap \mathrm{Sub}N= \mathsf{Fac}M$. Thus, (1) has been obtained. 
		
		Since $\mathsf{Fac} M$ is a torsion class,   $\mathsf{Cok}(\mathcal{C})$ necessarily lies in $\mathsf{Fac} M$. Therefore, the condition that $\mathcal{C}$ is ICE-closed is equivalent to  $\mathsf{Cok}(\mathcal{C}) \subseteq\mathrm{Sub}N$ which implies (3).
		
		Similarly, we  can prove (2)  and (4).\end{proof}

	While the definition of a canonical twin support $\tau$-tilting module is conceptually symmetric and globally determined by the intersection $\mathcal{C}$, it is not as intuitive as the definition of the twin rigid module. Next, 
	we provide this correct characterization of  canonical twin support $\tau$-tilting modules via  torsion theory.
	
	Let $M$ be a support $\tau$-tilting module and $N$ a support $\tau^-$-tilting module in  $\mathrm{mod}\, \Lambda$.
	The torsion-free class $\mathrm{Sub}N$ induces a torsion pair $({}^{\perp}N, \mathrm{Sub}N)$ in $\mathrm{mod}\, \Lambda$. Thus, there exists a canonical short exact sequence for $M$:
	\begin{equation} \label{eq:seq_M}
		0 \longrightarrow U_M \longrightarrow M \xrightarrow{\pi} P_M \longrightarrow 0,
	\end{equation}
	where $P_M \in \mathrm{Sub}N$ is the maximal torsion-free quotient of $M$, and $U_M \in {}^{\perp}N$.
	Dually, the torsion class $\mathrm{Fac}M$ induces a torsion pair $(\mathrm{Fac}M, M^{\perp})$. The canonical short exact sequence for $N$ is:
	\begin{equation} \label{eq:seq_N}
		0 \longrightarrow I^N \xrightarrow{\iota} N \longrightarrow V^N \longrightarrow 0,
	\end{equation}
	where $I^N \in \mathrm{Fac}M$ is the maximal torsion submodule of $N$, and $V^N \in M^{\perp}$.

	\begin{theorem} \label{3.7}
		Let $M$ be a basic support $\tau$-tilting module and $N$ be a basic support $\tau^{-}$-tilting module. Let $\mathcal{C} = \mathrm{Fac}M \cap \mathrm{Sub}N$. Then the following equalities hold:
		
		\begin{enumerate}[label=\text{\rm(\arabic*)}]
			
			\item 
			$\mathcal{T}(\mathcal{C}) = \mathcal{T}(P_M)$.
			\item
			$\mathcal{F}(\mathcal{C}) = \mathcal{F}(I^N)$.
			
		\end{enumerate}
		
		Consequently, the pair $(M, N)$ is  canonical  if and only if $\mathrm{Fac}M = \mathcal{T}(P_M)$ and $\mathrm{Sub}N = \mathcal{F}(I^N)$.
	\end{theorem}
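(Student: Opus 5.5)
The plan is to show both inclusions $\mathcal{T}(P_M)\subseteq\mathcal{T}(\mathcal{C})$ and $\mathcal{C}\subseteq\mathsf{Fac}\,P_M\subseteq\mathcal{T}(P_M)$, using only the canonical sequence \eqref{eq:seq_M} and the orthogonality of the torsion pair $({}^{\perp}N,\mathrm{Sub}N)$. Statement (2) will follow by the dual argument with \eqref{eq:seq_N}. The final equivalence is then just Definition \ref{3.2} rewritten.

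For $\mathcal{T}(P_M)\subseteq\mathcal{T}(\mathcal{C})$, it suffices to check that $P_M\in\mathcal{C}$. Since $\pi\colon M\to P_M$ is surjective, we have $P_M\in\mathrm{Fac}M$. By construction of \eqref{eq:seq_M}, $P_M\in\mathrm{Sub}N$. Hence $P_M\in\mathrm{Fac}M\cap\mathrm{Sub}N=\mathcal{C}$, and minimality of $\mathcal{T}(P_M)$ gives $\mathcal{T}(P_M)\subseteq\mathcal{T}(\mathcal{C})$.

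For the reverse inclusion, it suffices to show $\mathcal{C}\subseteq\mathsf{Fac}\,P_M$, because then $\mathcal{T}(\mathcal{C})\subseteq\mathcal{T}(P_M)$. Take $X\in\mathcal{C}$. Since $X\in\mathrm{Fac}M$, there is an epimorphism $f\colon M^{n}\to X$. The direct sum of $n$ copies of \eqref{eq:seq_M} is exact:
\[
0\to U_M^{n}\to M^{n}\xrightarrow{\pi^{n}} P_M^{n}\to 0 .
\]
Here $U_M^{n}\in{}^{\perp}N$, since ${}^{\perp}N$ is closed under direct sums, and $X\in\mathrm{Sub}N=({}^{\perp}N)^{\perp}$. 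Hence the restriction of $f$ to $U_M^{n}$ is zero. Therefore $f$ factors as $f=g\circ\pi^{n}$ for some $g\colon P_M^{n}\to X$, and $g$ is surjective because $f$ is. Thus $X\in\mathsf{Fac}\,P_M$.

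The dual argument proves (2). We have $I^N\in\mathrm{Fac}M$ by \eqref{eq:seq_N}, and $I^N\in\mathrm{Sub}N$ as a submodule of $N$, so $I^N\in\mathcal{C}$. Conversely, for $X\in\mathcal{C}$, take a monomorphism $X\to N^{n}$. Since $X\in\mathrm{Fac}M$ and $V^{N,n}\in M^{\perp}$, the composite $X\to N^{n}\to (V^N)^{n}$ vanishes. Hence $X\to N^{n}$ factors through $(I^N)^{n}$, so $X\in\mathsf{Sub}\,I^N\subseteq\mathcal{F}(I^N)$.

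For the consequence, $(M,N)$ is canonical exactly when $\mathrm{Fac}M=\mathcal{T}(\mathcal{C})$ and $\mathrm{Sub}N=\mathcal{F}(\mathcal{C})$. By (1) and (2), these conditions are $\mathrm{Fac}M=\mathcal{T}(P_M)$ and $\mathrm{Sub}N=\mathcal{F}(I^N)$.

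The only step needing care is the factorization in the reverse inclusion. It rests on the identification $\mathrm{Sub}N=({}^{\perp}N)^{\perp}$, which holds because $({}^{\perp}N,\mathrm{Sub}N)$ is a torsion pair, as stated before \eqref{eq:seq_M}. Dually it uses $\mathrm{Fac}M={}^{\perp}(M^{\perp})$. Beyond this, the argument is routine.
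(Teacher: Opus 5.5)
Your proof is correct and follows essentially the same route as the paper. You show $P_M\in\mathcal{C}$ for one inclusion, and for the other you factor an epimorphism $M^n\to X$ through $P_M^n$, using that $U_M^n\in{}^{\perp}N$ must map to zero in $X\in\mathrm{Sub}N$; (2) follows by duality and the final equivalence comes from Definition \ref{3.2} (you also write out the dual case for $I^N$, which the paper leaves as ``completely dual'').
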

	
	\begin{proof}
		We first prove $\mathcal{T}(\mathcal{C}) = \mathcal{T}(P_M)$.
		
		Note that $P_M$ is a quotient of $M$ by the sequence (\ref{eq:seq_M}), we have  $P_M \in \mathrm{Fac}M$. By construction, $P_M \in \mathrm{Sub}N$. Therefore, $P_M \in \mathrm{Fac}M \cap \mathrm{Sub}N = \mathcal{C}$ yields $\mathcal{T}(P_M) \subseteq \mathcal{T}(\mathcal{C})$.
		
		To show the reverse inclusion $\mathcal{T}(\mathcal{C}) \subseteq \mathcal{T}(P_M)$, let $X \in \mathcal{C}$. Since $X \in \mathrm{Fac}M$, there exists an integer $d \ge 1$ and a surjective morphism $f: M^d \twoheadrightarrow X$. Consider  the following diagram:
		
		\begin{center}	
			\begin{tikzcd}
				0 \arrow[r]  & U^d_M \arrow[r] \arrow[rd, "0"] & M^d \arrow[r]\arrow[d, two heads, "f"]  & P^d_M \arrow[r] \arrow[ld, dashed, "k"] & 0  \\
				&  &X  &   & 
			\end{tikzcd}
			
		\end{center}		
		Since $X \in \mathcal{C} \subseteq \mathrm{Sub}N$, 
		we know every morphism from $U^d_M$ to $X$ is zero.
		Therefore, any morphism from $M^d$ to $X$ must factor  through the maximal torsion-free quotient of $M^d$, which is precisely $P_M^d$. This factorization yields a surjective morphism $k: P_M^d \twoheadrightarrow X$. Consequently, $X \in \mathrm{Fac}P_M \subseteq \mathcal{T}(P_M)$. Since this holds for all $X \in \mathcal{C}$, we obtain $\mathcal{C} \subseteq \mathcal{T}(P_M)$. Taking the torsion closure on both sides gives $\mathcal{T}(\mathcal{C}) \subseteq \mathcal{T}(P_M)$. Thus, $\mathcal{T}(\mathcal{C}) = \mathcal{T}(P_M)$.
		
		By a completely dual argument, we can prove $\mathcal{F}(\mathcal{C}) = \mathcal{F}(I^N)$.
		
		Finally, by definition of the canonical support $\tau$-tilting module, $(M, N)$ is canonical if and only if $\mathrm{Fac}M = \mathcal{T}(P_M)$ and $\mathrm{Sub}N = \mathcal{F}(I^N)$.
	\end{proof}
	
	\vspace{0.2cm}\noindent
	{\bf A canonicalization procedure for twin support $\tau$-tilting modules}:
	
	The   characterization for canonical  twin support $\tau$-tilting module in Theorem \ref{3.7} provides a constructive algorithm to canonicalize a twin support $\tau$-tilting module  $(M, N)$ such that $\mathcal C_{M,N}:=\operatorname{Fac}M\cap\operatorname{Sub}N$
	is left-and-right finite. Given $(M, N)$, we compute $P_M$ and $ I^N$ via the sequence (\ref{eq:seq_M})  and (\ref{eq:seq_N}). Let $M^*$ be the basic support $\tau$-tilting module  and $N^*$  the basic support $\tau^-$-tilting module such that $\mathsf{Fac} M^* = \mathcal{T}(P_M)$ and $\mathsf{Sub} N^* = \mathcal{F}(I^N)$. Then $(M^*, N^*)$ is the unique canonical twin support $\tau$-tilting module such that $\Phi(M^*, N^*) = \mathcal{C}_{M,N}$.

	\vspace{0.2cm}

	Let $\mathcal{C}$ be a subcategory of $\mathrm{mod}\, \Lambda$. $P\in\mathcal{C}$ is Ext-projective if it satisfies $\Ext^1
	(P, C) = 0$ for any $C\in\mathcal{C}$.
	$P\in\mathcal{C}$  is an Ext-progenerator of $\mathcal{C}$ if $P$ is Ext-projective  and for any $W\in\mathcal{C}$, there exists a short exact sequence
			$0\to L\to P'\to W\to 0$
	such that $P'\in \mathsf{add} P$  and $L\in\mathcal{C}$.
	If $\mathcal{C}$ has an Ext-progenerator, then $\PP( \mathcal{C})$ denotes a unique basic Ext-progenerator of  $\mathcal{C}$,
	that is, a direct sum of all indecomposable Ext-projective objects in  $\mathcal{C}$ up to isomorphism.
	Dually, the notions for Ext-injectives are defined, and $\II( \mathcal{C})$ denotes a unique basic Ext-injective
	cogenerator of  $\mathcal{C}$ (if exists).
	
	Now, let $X \in\mathcal{C} = \mathrm{Fac}M \cap \mathrm{Sub}N$. Applying $\Hom_{\Lambda}(-,X)$ to the exact sequence (\ref{eq:seq_M}), we obtain
	$$\Hom_{\Lambda}(U_M,X)\to\Ext^1(P_M,X)\to \Ext^1(M,X).$$
	Note that  $U_M \in {}^{\perp}N={}^{\perp}\Sub N$, we have $\Hom_{\Lambda}(U_M,X)=0$. Since $M$ is a support $\tau$-tilting module, we have $\Ext^1(M,\Fac M)=0$ by  \cite[Proposition~1.2(a)]{AIR14} which implies $\Ext^1(M,X)=0$. Thus, $\Ext^1(P_M,X)=0$ and $P_M$ is Ext-projective in $\mathcal{C} = \mathrm{Fac}M \cap \mathrm{Sub}N$. We claim  that $P_M$ is the  Ext-progenerator of  $\mathcal{C}$. 
	
	Let $M$ be the  $\tau$-tilting $\Lambda/\Lambda e\Lambda$-module for some idempotent $e$ and $W\in\mathcal{C}$. Since $W\in\Fac M$, there is a surjective right $\add M$-approximation  $g: M' \twoheadrightarrow W$ with $M' \in \add M$, yielding a short exact sequence in $\modcat\,\Lambda$:
	\begin{equation}
		0 \to K \to M' \xrightarrow{g} W \to 0 \label{eq:cover}
	\end{equation}
	\cite[Lemma 2.6]{AIR14} shows $K\in {}^{\perp}(\tau M)$. Note that ${}^{\perp}(\tau M)\cap (e\Lambda)^{\perp}=\Fac M$\cite[Corollary 2.13]{AIR14} and $K\in  (e\Lambda)^{\perp}$, we have $K \in \Fac M$.
	On the other hand, we have the canonical decomposition sequence for $M' \in \add M$:
	\begin{equation}
		0 \to U_{M'} \to M' \xrightarrow{\pi} P_{M'} \to 0 \label{eq:canonical2}
	\end{equation}
	where $U_{M'} \in {}^\perp N$ and $P_{M'} \in \Sub N$. Note that $P_{M'} \in \Fac M$ since it is a quotient of $M' \in \add M$. Thus $P_{M'} \in \Fac M \cap \Sub N = \mathcal{C}$. Since $W \in \mathcal{C}\subseteq \Sub N$ and $U_{M'} \in {}^\perp N$, we have $\Hom_\Lambda(U_{M'}, W) = 0$. 
	By the universal property of cokernels, $g$ factors through $\pi$, inducing a surjective morphism $h: P_{M'} \twoheadrightarrow W$ such that $h \circ \pi = g$. 
	Let $L = \ker h$.
	\begin{center}	
		\begin{tikzcd}
			& 0 \arrow[d] &0\arrow[d]  &0\arrow[d]  &   \\
			0 \arrow[r,dashed]  & U_{M'} \arrow[r,dashed] \arrow[d,equal] &K \arrow[r,dashed]\arrow[d]  &L \arrow[d] &  \\
			0 \arrow[r]  & U_{M'} \arrow[r]\arrow[d]  \arrow[rd, "0"] & M' \arrow[r,"\pi"]\arrow[d, "g"]  & P_{M'} \arrow[r] \arrow[d, dashed, "h"] & 0  \\
			0\arrow[r]	&0 \arrow[r]\arrow[d]  &W \arrow[r,equal]  \arrow[d] &  W \arrow[r]\arrow[d]&0 \\
			& 0 &0  &  0 &
			\arrow[from=2-4, to=5-2, dashed, bend left=30]
		\end{tikzcd}
		
	\end{center}
	By  the Snake Lemma, we have a short exact sequence:
	$$	0 \to U_{M'} \to K \to L \to 0.$$
	Since we established earlier that $K \in \Fac M$, it immediately guarantees that $L \in \Fac M$. By definition of  $L = \ker h$,  $L$ is a submodule of $P_{M'} \in \Sub N$. It inevitably follows that $L \in \Sub N$.
	Therefore, $L \in \Fac M \cap \Sub N = \mathcal{C}$. 
	This implies that the short exact sequence  within $\mathcal{C}$:
	\[ 0 \to L \to P_{M'} \xrightarrow{h} W \to 0. \]
Since the torsion-free quotient functor associated with the torsion pair \(({}^\perp N,\operatorname{Sub}N)\) is additive, $M'\in \add M$ implies $P_{M'}\in \add P_M$. Hence the above exact sequence shows that \(P_M\) is an Ext-progenerator of $\mathcal{C}$. Dually, we can obtain a similar statement for  $I^N$. Finally, we have the following result:
	
	\begin{proposition}\label{3.8} Let $(M,N)$ be a twin support $\tau$-tilting module and $\mathcal{C} = \mathrm{Fac}M \cap \mathrm{Sub}N$. Assume that $P_M$, $I_N$ are defined as sequences \eqref{eq:seq_M} and \eqref{eq:seq_N}. If $P$ is the basic module such that $\add P=\add P_M$ and $I$ is the basic module such that $\add I=\add I^N$, then
		we have	  $\PP( \mathcal{C})=P$  and $\II( \mathcal{C})=I$.
	\end{proposition}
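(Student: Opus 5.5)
The discussion preceding Proposition \ref{3.8} already shows that $P_M$ is Ext-projective in $\mathcal{C}$ and is an Ext-progenerator of $\mathcal{C}$. What remains is to prove that $\add P_M$ is \emph{exactly} the class of Ext-projective objects of $\mathcal{C}$. The basic Ext-progenerator $\PP(\mathcal{C})$ is the direct sum of all indecomposable Ext-projectives, so this gives $\PP(\mathcal{C})=P$. The statement for $I^N$ then follows by the dual argument, using the sequence \eqref{eq:seq_N}, the vanishing $\Ext^1(\Sub N,N)=0$ (dual of \cite[Proposition~1.2(a)]{AIR14}), and the dual version of \cite[Lemma 2.6, Corollary 2.13]{AIR14}.

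\textbf{Step 1 ($\add P_M$ consists of Ext-projectives).} This is immediate. Ext-projectivity is preserved under direct sums and summands, and the paper has shown $\Ext^1(P_M,X)=0$ for every $X\in\mathcal{C}$. Moreover, $P_M\in\mathcal{C}$ by construction, and $\mathcal{C}$ is closed under summands.

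\textbf{Step 2 (every Ext-projective lies in $\add P_M$).} Let $Q\in\mathcal{C}$ be Ext-projective. Apply the progenerator property established above with $W=Q$. This gives a short exact sequence
\[0\to L\to P_{M'}\xrightarrow{h} Q\to 0\]
with $P_{M'}\in\add P_M$ and $L\in\mathcal{C}$. Since $L\in\mathcal{C}$ and $Q$ is Ext-projective in $\mathcal{C}$, we have $\Ext^1(Q,L)=0$. Hence the sequence splits, and $Q$ is a direct summand of $P_{M'}$, so $Q\in\add P_M$.

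Consequently the indecomposable Ext-projectives of $\mathcal{C}$ are precisely the indecomposable summands of $P_M$. Taking one copy of each yields the basic module $P$ with $\add P=\add P_M$, and therefore $\PP(\mathcal{C})=P$.

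\textbf{Main obstacle.} The only real subtlety lies in the input to Step 2: we need a surjection $P_{M'}\to Q$ whose kernel lies in $\mathcal{C}$, not merely in $\Fac M$. This is exactly what the Snake Lemma argument before the statement provides, via $K\in\Fac M$ from \cite[Lemma 2.6, Corollary 2.13]{AIR14}. In the dual case, the same care is needed to verify that the cokernel of the left $\add N$-approximation lies in $\Sub N$.
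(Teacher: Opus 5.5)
Your proposal is correct and follows the paper's route. The discussion preceding Proposition~\ref{3.8} supplies exactly the Ext-projectivity of $P_M$ and the sequences $0\to L\to P_{M'}\to W\to 0$ with $L\in\mathcal{C}$ (with the $I^N$ case left to duality), and your Step~2, splitting that sequence when $W$ is itself Ext-projective, makes explicit the standard fact that every Ext-projective of $\mathcal{C}$ lies in $\add P_M$, which the paper leaves implicit in its phrase ``unique basic Ext-progenerator''.
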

	
	\begin{definition}\label{3.9}
		Let $\Lambda$ be an algebra. A pair of basic modules $(P, I)$ is called a  \textbf{canonical  Ext-pair} if  there exists
		a pair $(M, N)$ satisfying
		\begin{enumerate}[label=\text{\rm(\arabic*)}]
			\item $M$ is a support $\tau$-tilting module and $N$ is a support $\tau^-$-tilting module.

			\item There are two short exact sequences:
			
			$$0 \longrightarrow U_M \longrightarrow M \xrightarrow{} P' \longrightarrow 0,$$
			$$0 \longrightarrow I' \xrightarrow{} N \longrightarrow V^N \longrightarrow 0,$$
			with $P' \in \mathrm{Sub}N$, $U_M \in {}^{\perp}N$ and
			$I' \in \mathrm{Fac}M$, $V^N \in M^{\perp}$. 
			
			\item $\add P =\add P'$  and  $\add I=\add I'$.
		\end{enumerate}

	\end{definition}

	\begin{theorem} \label{3.10}
		Let $\Lambda$ be a   hereditary or $\tau$-tilting finite algebra. There is a bijection between the set of functorially finite IE-closed subcategories of $\mathrm{mod}\, \Lambda$ and  the set of isomorphism classes of canonical Ext-pairs $(P, I)$ in $\mathrm{mod}\, \Lambda$. The bijection is explicitly given by the mappings:
		\begin{align*}
			\Phi': (P, I) &\mapsto  \mathcal{T}(P) \cap  \mathcal{F}(I), \\
			\Psi': \mathcal{C} &\mapsto (\PP({\mathcal{C}}), \II( \mathcal{C})).
		\end{align*}
	\end{theorem}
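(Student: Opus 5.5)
The strategy is to reduce Theorem \ref{3.10} to Theorem \ref{3.3}, Theorem \ref{3.7} and Proposition \ref{3.8}. The first step is to check that, under either hypothesis, the functorially finite IE-closed subcategories are exactly the left-and-right finite ones. Proposition \ref{3.1} gives one inclusion in general. For the converse: in the $\tau$-tilting finite case every torsion and torsion-free class is functorially finite by Theorem \ref{2.2}; in the hereditary case this is \cite[Lemma 2.9]{ES23}. So it suffices to match left-and-right finite IE-closed subcategories with canonical Ext-pairs.

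\textbf{Well-definedness of $\Psi'$.} Let $\mathcal{C}$ be such a subcategory, and let $(M,N)=(M_{\mathcal{C}},N_{\mathcal{C}})$ be its canonical twin support $\tau$-tilting module from Theorem \ref{3.3}, so $\mathcal{C}=\mathrm{Fac}M\cap\mathrm{Sub}N$.
\begin{itemize}
\item The canonical sequences \eqref{eq:seq_M} and \eqref{eq:seq_N} for this pair satisfy conditions (1)--(2) of Definition \ref{3.9}.
\item Proposition \ref{3.8} gives $\add \PP(\mathcal{C})=\add P_M$ and $\add \II(\mathcal{C})=\add I^N$, which is condition (3).
\end{itemize}
Hence $\Psi'(\mathcal{C})$ is a canonical Ext-pair.

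\textbf{Well-definedness of $\Phi'$.} Let $(P,I)$ be a canonical Ext-pair witnessed by $(M,N)$. Since torsion pairs give functorial decompositions, the sequences in Definition \ref{3.9}(2) are forced to be the canonical sequences \eqref{eq:seq_M} and \eqref{eq:seq_N}. Thus $P'\cong P_M$ and $I'\cong I^N$, and so $\mathcal{T}(P)=\mathcal{T}(P_M)$ and $\mathcal{F}(I)=\mathcal{F}(I^N)$. Set $\mathcal{C}_{M,N}=\mathrm{Fac}M\cap\mathrm{Sub}N$. By Theorem \ref{3.7}, $\mathcal{T}(\mathcal{C}_{M,N})=\mathcal{T}(P)$ and $\mathcal{F}(\mathcal{C}_{M,N})=\mathcal{F}(I)$. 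Proposition \ref{2.4} then gives
\[
\mathcal{C}_{M,N}=\mathcal{T}(\mathcal{C}_{M,N})\cap\mathcal{F}(\mathcal{C}_{M,N})=\mathcal{T}(P)\cap\mathcal{F}(I)=\Phi'(P,I).
\]
So $\Phi'(P,I)$ is IE-closed. We still need it to be functorially finite. In the $\tau$-tilting finite case this is automatic. In the hereditary case we must show that $\mathcal{T}(P)$ and $\mathcal{F}(I)$ are functorially finite. This is the main obstacle, since $(M,N)$ need not be canonical. My first attempt is to use hereditarity: $P\in\mathrm{Fac}M$ and $\Ext^1(P,\mathrm{Fac}P)=0$ (it is Ext-projective in $\mathcal{C}_{M,N}$, and $\mathrm{Fac}P\subseteq\mathrm{Fac}M$). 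I would like to conclude that $P$ is rigid, so that $\mathcal{T}(P)=\mathrm{Fac}P$ is functorially finite, since over a hereditary algebra $\mathrm{Fac}$ of a rigid module is a torsion class. Rigidity of $P$ itself needs care, because $\mathrm{Fac}P\not\subseteq\mathcal{C}$ in general. Failing that, I would invoke \cite[Lemma 2.9]{ES23} directly: a functorially finite IE-closed subcategory of a hereditary algebra has functorially finite $\mathcal{T}$ and $\mathcal{F}$. This reduces the task to showing that $\mathcal{C}_{M,N}$ is functorially finite, which holds by Proposition \ref{3.1} applied to $\mathrm{Fac}M$ and $\mathrm{Sub}N$.

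\textbf{Mutual inverses.} Take $\mathcal{C}$ functorially finite. Then
\[
\Phi'\Psi'(\mathcal{C})=\mathcal{T}(\PP(\mathcal{C}))\cap\mathcal{F}(\II(\mathcal{C}))=\mathcal{T}(P_M)\cap\mathcal{F}(I^N)=\mathcal{T}(\mathcal{C})\cap\mathcal{F}(\mathcal{C})=\mathcal{C},
\]
using Theorem \ref{3.7} and Proposition \ref{2.4}. Conversely, for a canonical Ext-pair $(P,I)$, the computation above shows $\Phi'(P,I)=\mathcal{C}_{M,N}$. Proposition \ref{3.8} applied to this $(M,N)$ then gives $\PP(\mathcal{C}_{M,N})=P$ and $\II(\mathcal{C}_{M,N})=I$. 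Hence $\Psi'\Phi'(P,I)\cong(P,I)$. Note that this direction does not require $(M,N)$ itself to be canonical, and injectivity of $\Phi'$ on isomorphism classes follows.
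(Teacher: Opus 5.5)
Your proposal is correct and follows essentially the same route as the paper. Under the hypotheses you identify functorially finite with left-and-right finite IE-closed subcategories, obtain $\Psi'(\mathcal{C})$ from the canonical twin $(M_{\mathcal{C}},N_{\mathcal{C}})$ via Theorem~\ref{3.7} and Proposition~\ref{3.8}, and show $\Phi'(P,I)=\mathrm{Fac}M\cap\mathrm{Sub}N$ for any witnessing pair so that Proposition~\ref{3.8} gives $\Psi'\Phi'=\mathrm{id}$. Your worry about whether $\mathcal{T}(P)$ and $\mathcal{F}(I)$ are functorially finite is unnecessary: $\Phi'(P,I)$ lands in the target set as soon as $\mathrm{Fac}M\cap\mathrm{Sub}N$ is functorially finite, which Proposition~\ref{3.1} gives over any algebra (and your remark that the sequences in Definition~\ref{3.9} are forced to be the canonical torsion decompositions makes explicit a point the paper leaves implicit).
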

	
	\begin{proof}
		Let  $M$ be a support $\tau$-tilting module and $N$  a support $\tau^-$-tilting module such that $(P, I)$ is a canonical Ext-pair. Then $\mathcal{C} = \mathrm{Fac}(M) \cap \mathrm{Sub}(N)$ which is a left-and-right finite IE-closed subcategory when  $\Lambda$ is  hereditary or $\tau$-tilting finite. 
		Theorem \ref{3.7}  implies $\mathcal{T}(\mathcal{C})=	\mathcal{T}(P)$ and 
		$\mathcal{F}(\mathcal{C})=	\mathcal{F}(I)$. Hence, $$\Phi' (P, I)=\mathcal{T}(P) \cap  \mathcal{F}(I)=\mathcal{T}(\mathcal{C})\cap\mathcal{F}(\mathcal{C})=\mathcal{C} $$
		is a  functorially finite IE-closed subcategory. Moreover, by Proposition \ref{3.8}
		$$\Psi'\Phi' (P, I)=\Psi'(\mathcal{C})= (\PP({\mathcal{C}}), \II( \mathcal{C}))=(P, I).$$

		Now, let  $\mathcal{C}$ be a functorially finite  IE-closed subcategory. Then  $\mathcal{C}$ is a left-and-right finite  IE-closed subcategory which corresponds to the canonical twin support $\tau$-tilting module  $(M, N)$. Then
		$\mathcal{C} = \mathrm{Fac}(M) \cap \mathrm{Sub}(N)$.  There are two canonical short exact sequences via torsion pair $({}^{\perp}N, \mathrm{Sub}N)$ and $(\mathrm{Fac}M, M^{\perp})$:
		$$0 \longrightarrow U_M \longrightarrow M \xrightarrow{} P' \longrightarrow 0,$$
		$$0 \longrightarrow I' \xrightarrow{} N \longrightarrow V^N \longrightarrow 0,$$
		with $P' \in \mathrm{Sub}N$, $U_M \in {}^{\perp}N$ and 
		$I' \in \mathrm{Fac}M$, $V^N \in M^{\perp}$. We also have $\mathcal{T}(\mathcal{C})=	\mathcal{T}(P')$ and 
		$\mathcal{F}(\mathcal{C})=	\mathcal{F}(I')$  by Theorem \ref{3.7} again.
		Let $P$ be the basic module such that $\add P=\add P'$ and $I$  the basic module such that $\add I=\add I'$.
		Proposition \ref{3.8} shows that   $\PP( \mathcal{C})=P$  and $\II( \mathcal{C})=I$ and, hence
		$$\Psi'(\mathcal{C}) =(\PP({\mathcal{C}}), \II( \mathcal{C}))=(P,I)$$
		is  a canonical Ext-pair.  Moreover,
		$$\Phi'\Psi'(\mathcal{C})=\Phi' (P, I)=\mathcal{T}(P) \cap  \mathcal{F}(I)=\mathcal{T}(P') \cap  \mathcal{F}(I')=\mathcal{T}(\mathcal{C})\cap\mathcal{F}(\mathcal{C})=\mathcal{C}.$$
		
	\end{proof}

In the hereditary case, Theorem \ref{3.10} associates to \(\mathcal C\) the pair \((P(\mathcal C),I(\mathcal C))\). By \cite[Theorem 2.14]{ES23}, the same pair is precisely the twin rigid module corresponding to \(\mathcal C\). Hence canonical Ext-pairs and twin rigid modules coincide.

	\section{Example}
	
	In this section, we illustrate our main results using a concrete example of a non-hereditary algebra.
	\begin{example}\label{ex:nakayama_full_computation}
		Let $\Lambda$ be the Nakayama algebra given by the quiver $1 \xrightarrow{\alpha} 2 \xrightarrow{\beta} 3$ with the relation $\alpha\beta = 0$.
		Note that $\Lambda$ is $\tau$-tilting finite but not hereditary ($\operatorname{gl.dim}\Lambda = 2$).
		There are exactly 12 basic support $\tau$-tilting modules and  12 basic support $\tau^-$-tilting modules. Note that 
		a canonical twin support $\tau$-tilting module $(M, N)$ necessarily satisfies the condition $\supp M=\supp N$, we list them according to their support sets, here we omit the symbols "$\oplus$" (see Table \ref{tab1}).
		\begin{table}[htbp]
			\centering
			\renewcommand{\arraystretch}{1.3} 
			\caption{ Support  $\tau$-(resp.,$\tau^-$-)tilting modules via support sets}
			\label{tab1}
			\begin{tabular}{c|c|c}
				\hline
				\textbf{Support Sets } & \textbf{Support $\tau$-tilting Modules}& \textbf{Support $\tau^-$-tilting Modules }\\
				\hline
				$\emptyset$  & $0$                        & $0$ \\
				\hline
				$\{1\} $  & $S_1$            & $S_1$ \\
				\hline
				$\{2\} $  & $S_2$            & $S_2$ \\
				\hline
				$	\{3\} $ & $S_3$            & $S_3$ \\
				\hline
				$	\{1,3\}$  & $S_1S_3$       & $S_1S_3$  \\
				\hline
				$	\{1,2\}$  &$S_1P_1$,\quad$S_2P_1$  &$S_1P_1$,\quad$S_2  P_1$  \\
				\hline
				$	\{2,3\}$  &$S_3P_2$,\quad$S_2P_2$  & $S_3P_2$,\quad$S_2P_2$  \\
				\hline
				$\{1,2,3\}$&$S_1S_3P_1$,\quad$S_2P_1P_2$,\quad$\Lambda$&$S_1S_3P_2$,\quad$S_2P_1P_2$,\quad$D\Lambda$ \\

				\hline
			\end{tabular}
		\end{table}

		Hence, we have $1^2+1^2+1^2+1^2+1^2+2^2+2^2+3^2=22$   twin support $\tau$-tilting modules with equal support. Since equality of
		supports is necessary for canonicality, all canonical pairs occur
		among these $22$ pairs. We will use Theorem \ref{3.7} to verify
		$(M, N) = (S_1S_3P_1, S_1S_3P_2)$ is a non-canonical twin support $\tau$-tilting module, and then construct the unique canonical twin support $\tau$-tilting module  $(M^*, N^*)$. Clearly,  $ \mathsf{Fac}M=\mathsf{Fac}(S_1S_3P_1) = \mathsf{add}\{S_1,S_3, P_1\}$ and  $\mathsf{Sub}N=\mathsf{Sub}(S_1S_3P_2) = \mathsf{add}\{S_1,S_3,P_2\}$. The decomposition of $M$ with respect to the  torsion pair $({}^{\perp}N, \mathrm{Sub}N)=(\mathsf{add}\{S_2\}, \mathsf{add}\{S_1,S_3,P_2\})$ yields the short exact sequence:
		$$ 0 \longrightarrow S_2 \longrightarrow S_1S_3P_1 \longrightarrow S_1S_1S_3 \longrightarrow 0. $$
		Similarly, $(\mathrm{Fac}M, M^{\perp})=(\mathsf{add}\{S_1,S_3,P_1\}, \mathsf{add}\{S_2\})$ and 	we have the canonical short exact sequence for $N$:	
		$$ 0 \longrightarrow S_1S_3S_3 \longrightarrow S_1S_3P_2 \longrightarrow S_2 \longrightarrow 0. $$	
		Thus,  $P_{M} = S_1S_1S_3$ and  $I_{N} = S_1S_3S_3$.
		Note that $\mathcal{T}(P_{M}) = \mathcal{T}(S_1S_3) = \mathsf{add}\{S_1,S_3\}\neq\mathsf{Fac} M$. Thus, $(M,N)$ is non-canonical.  Now, take $M^* = S_1S_3$ and $N^* = S_1S_3$. It is  easy to verify $\mathsf{Fac} M^* = \mathcal{T}(P_{M})$ and $\mathsf{Sub} N^*=\mathcal{F}(I^N)$.
		Thus, the unique canonical twin support $\tau$-tilting module corresponding to $\mathcal{C} =\mathsf{Fac} M\cap \mathsf{Sub} N= \add\{S_1,S_3\}$ is $(M^*,N^*)=(S_1S_3,S_1S_3)$.
		
		Finally, Finally, the remaining pairs are all  canonical twin support $\tau$-tilting modules.  The following Table \ref{tab2} lists the one-to-one correspondence among the 21 IE-closed subcategories, the 21 canonical twin support $\tau$-tilting modules, and  canonical Ext-pairs.

		\begin{table}[htpb]
			\centering
			\caption{IE-closed subcategories,canonical twin support $\tau$-tilting modules, canonical Ext-pairs}
			\label{tab2}
			\renewcommand{\arraystretch}{1.8}
			\begin{tabular}{c|c|c}
				\hline
				\textbf{IE-closed Subcategory} & \textbf{Canonical Twin Support}  & \textbf{Canonical Ext-pairs} \\
				\textbf{ $\mathcal{C}$}&  \textbf{  $\tau$-tilting Modules $(M, N)$ }& \textbf{(P,I)} \\
				\hline
				$0$ & $(0,0)$ & $(0, 0)$ \\
				$\mathsf{add}\{S_1\}$ & $(S_1,S_1)$ & $(S_1, S_1)$ \\
				$\mathsf{add}\{S_2\}$ & $(S_2,S_2)$& $(S_2,S_2)$ \\
				$\mathsf{add}\{S_3\}$ & $(S_3,S_3)$ & $(S_3,S_3)$ \\
				$\mathsf{add}\{S_1, S_3\}$ & $(S_1S_3,S_1S_3)$ &$(S_1S_3,S_1S_3)$ \\
				$\mathsf{add}\{S_1,P_1 \}$ &$(S_1P_1,S_1P_1)$&$(S_1P_1,S_1P_1)$ \\
				$\mathsf{add}\{S_1,S_2,P_1\}$ &$(S_2P_1,S_1P_1)$ & $(S_2P_1, S_1P_1)$ \\
				$\mathsf{add}\{P_1 \}$ &$(S_1P_1,S_2P_1)$ & $(P_1, P_1)$ \\
				$\mathsf{add}\{S_2,P_1\}$ &$(S_2P_1,S_2P_1)$ & $(S_2P_1, S_2P_1)$ \\
				$\mathsf{add}\{S_3,P_2\}$ & $(S_3P_2,S_3P_2)$  &  $(S_3P_2,S_3P_2)$ \\
				$\mathsf{add}\{S_2,S_3, P_2\}$ & $(S_3P_2,S_2P_2)$ &  $(S_3P_2,S_2P_2)$\\
				$\mathsf{add}\{P_2\}$ & $(S_2P_2,S_3P_2)$ &  $(P_2,P_2)$  \\
				$\mathsf{add}\{S_2,P_2\}$ &  $(S_2P_2,S_2P_2)$ &  $(S_2P_2,S_2P_2)$ \\
				
				$\mathsf{add}\{S_3,P_1\}$ & $(S_1S_3P_1,S_2P_1P_2)$ & $(S_3P_1, S_3P_1)$ \\
				$\mathsf{add}\{S_1,P_2\}$ & $(S_2P_1P_2,S_1S_3P_2)$& $(S_1P_2,S_1P_2)$ \\
				$\mathsf{add}\{S_2,P_1,P_2\}$ & $(S_2P_1P_2,S_2P_1P_2)$& $(S_2P_1P_2,S_2P_1P_2)$ \\
				$\mathsf{add}\{S_1,S_3,P_2\}$ & $(\Lambda,S_1S_3P_2)$ & $(S_1S_3P_2, S_1S_3P_2)$ \\
				$\mathsf{add}\{S_2,S_3,P_1,P_2\}$ &$(\Lambda,S_2P_1P_2)$ & $(S_3P_1P_2, S_2P_1P_2)$ \\
				$\mathsf{add}\{S_1,S_3,P_1\}$ &$(S_1S_3P_1,D\Lambda)$ & $(S_1S_3P_1, S_1S_3P_1)$ \\
				$\mathsf{add}\{S_1,S_2,P_1,P_2\}$ & $(S_2P_1P_2,D\Lambda)$ & $(S_2P_1P_2, S_1P_1P_2)$ \\
				$\operatorname{mod}\Lambda$ & $(\Lambda,D\Lambda)$ & $(\Lambda,D\Lambda)$ \\
				\hline
			\end{tabular}
		\end{table}
	\end{example}

	\section*{Acknowledgments}
	
	Hanpeng Gao is supported by  National Natural Science Foundation of China (No.12301041),
	Dajun  Liu  is supported by  National Natural Science Foundation of China (No.12101003) and
	the Natural Science Foundation of Anhui province (No.2108085QA07),
	Yu-Zhe Liu is supported by National Natural Science Foundation of China (Nos. 12561008, 12401042),
	Guizhou Provincial Major Project of Basic Research Program (Grant No. VZD[2026]001),
	Guizhou Provincial Basic Research Program (Natural Science) (Grant Nos. ZD[2025]085 and ZK[2024]YiBan066), 
	and Scientific Research Foundation of Guizhou University (Grant Nos. [2023]16).
	The authors would like to thank Dhunya Saito and Francesco Sentieri for helpful discussions.


\end{document}